\DeclarePairedDelimiter\floor{\lfloor}{\rfloor}
\title[Planar incidences and geometric inequalities in $\He$]{Planar incidences and geometric inequalities in the Heisenberg group}
\author{Katrin F\"assler, Tuomas Orponen, and Andrea Pinamonti}
\address{Department of Mathematics and Statistics \\ University of Jyv\"askyl\"a, P.O. Box. 35 (MaD), FI-40014 University of Jyv\"askyl\"a \\ Finland}
\address{Department of Mathematics and Statistics\\ University of Helsinki,
P.O. Box 68 (Pietari Kalmin katu 5)\\
FI-00014 University of Helsinki\\
Finland}
\address{Department of Mathematics\\
University of Trento,
Via Sommarive 14\\
I-38123 Povo\\
Italy}
\email{katrin.s.fassler@jyu.fi}
\email{tuomas.orponen@helsinki.fi}
\email{andrea.pinamonti@unitn.it}
\date{\today}
\subjclass[2010]{28A75 (primary) 52C99, 46E35, 35R03 (secondary)}
\keywords{Incidence geometry, Loomis-Whitney inequality,
Heisenberg group, Sobolev inequality, isoperimetric inequality}
\thanks{K.F. is supported by the Academy of Finland via the project \emph{Singular integrals, harmonic functions, and boundary regularity in Heisenberg groups}, grant Nos. 321696, 328846. T.O. is supported by the Academy of Finland via the project \emph{Quantitative rectifiability in Euclidean and non-Euclidean spaces}, grant Nos. 309365, 314172.}
\newcommand{\R}{\mathbb{R}}
\newcommand{\W}{\mathbb{W}}
\newcommand{\He}{\mathbb{H}}
\newcommand{\N}{\mathbb{N}}
\newcommand{\Z}{\mathbb{Z}}
\newcommand{\tn}{\mathbb{P}}
\newcommand{\calH}{\mathcal{H}}
\newcommand{\spa}{\operatorname{span}}
\newcommand{\card}{\operatorname{card}}
\def\Barint_#1{\mathchoice
          {\mathop{\vrule width 6pt height 3 pt depth -2.5pt
                  \kern -8pt \intop}\nolimits_{#1}}%
          {\mathop{\vrule width 5pt height 3 pt depth -2.6pt
                  \kern -6pt \intop}\nolimits_{#1}}%
          {\mathop{\vrule width 5pt height 3 pt depth -2.6pt
                  \kern -6pt \intop}\nolimits_{#1}}%
          {\mathop{\vrule width 5pt height 3 pt depth -2.6pt
                  \kern -6pt \intop}\nolimits_{#1}}}
\numberwithin{equation}{section}
\theoremstyle{plain}
\newtheorem{thm}[equation]{Theorem}
\newtheorem{lemma}[equation]{Lemma}
\newtheorem{ex}[equation]{Example}
\newtheorem{cor}[equation]{Corollary}
\newtheorem{proposition}[equation]{Proposition}
\newtheorem{question}{Question}
\theoremstyle{definition}
\newtheorem{definition}[equation]{Definition}
\theoremstyle{remark}
\newtheorem{remark}[equation]{Remark}
\newcommand{\nref}[1]{(\hyperref[#1]{#1})}
\begin{document}

\begin{abstract} We prove that if $P,\mathcal{L}$ are finite sets of $\delta$-separated points and lines in $\R^{2}$, the number of $\delta$-incidences between $P$ and $\mathcal{L}$ is no larger than a constant times
\begin{displaymath} |P|^{2/3}|\mathcal{L}|^{2/3} \cdot \delta^{-1/3}. \end{displaymath}
We apply the bound to obtain the following variant of the \emph{Loomis-Whitney inequality} in the Heisenberg group:
\begin{displaymath} |K| \lesssim |\pi_{x}(K)|^{2/3} \cdot |\pi_{y}(K)|^{2/3}, \qquad K \subset \He. \end{displaymath}
Here $\pi_{x}$ and $\pi_{y}$ are the \emph{vertical projections} to the $xt$- and $yt$-planes, respectively, and $|\cdot|$ refers to natural Haar measure on either $\He$, or one of the planes. Finally, as a corollary of the Loomis-Whitney inequality, we deduce that
\begin{displaymath} \|f\|_{4/3} \lesssim \sqrt{\|Xf\| \|Yf\| }, \qquad f \in BV(\He), \end{displaymath}
where $X,Y$ are the standard horizontal vector fields in $\He$. This is a sharper version of the classical geometric Sobolev inequality $\|f\|_{4/3} \lesssim \|\nabla_{\He}f\|$ for $f \in BV(\He)$.
 \end{abstract}

\maketitle

\tableofcontents

\section{Introduction}

The \emph{Loomis-Whitney} inequality in $\R^{n}$ bounds the volume of a set $K \subset \R^{n}$ by the areas of its coordinate projections:
\begin{equation}\label{LWIneq} |K| \leq \prod_{j = 1}^{n} |\pi_{j}(K)|^{1/(n - 1)}, \end{equation}
where $\pi_{j}(x_{1},\ldots,x_{n}) = (x_{1},\ldots,x_{j - 1},x_{j + 1},\ldots,x_{n})$. Here $|A|$ refers to $d$-dimensional Lebesgue measure in $\R^{d}$ whenever $A \subset \R^{d}$. The same notation will also refer to cardinality, but the appropriate meaning should always be clear from the context. The inequality \eqref{LWIneq} is due to Loomis and Whitney \cite{MR0031538} from 1949. The starting point of the present paper was to find an analogue of \eqref{LWIneq} in Heisenberg groups. It turns out that already in the first group $\He = (\R^{3},\cdot)$, this leads to an interesting incidence geometric problem, which we are able to fully resolve. In higher groups, the question will remain open.

\subsection{A bound on $\delta$-incidences in the plane} Before setting up the Heisenberg notation, we discuss the more elementary incidence geometry problem -- in $\R^{2}$.
Let $P \subset Q_{0} := [-1,1]^{2}$ be a finite set, and let
$\mathcal{L}$ be a finite collection of lines $\ell_{(a,b)} := \{y
= ax + b\} \subset \R^{2}$, with $(a,b) \in Q_{0}$; we denote the
collection of all such lines by $\mathcal{Q}_{0}$. We fix a
"scale" $0 < \delta < 1$, and assume that all the points in $P$
and lines in $\mathcal{L}$ are $\delta$-separated. Two points $p,q
\in \R^{2}$ are called $\delta$-separated if $|p - q| \geq
\delta$. Two lines $\ell_{p},\ell_{q} \in \mathcal{Q}_{0}$ are
called $\delta$-separated if $p,q \in Q_{0}$ are
$\delta$-separated. We say that a point $p \in \R^{2}$ is
\emph{$\delta$-incident} to a line $\ell \subset \R^{2}$ if $p$
lies in the \emph{$\delta$-neighborhood} $\ell(\delta)$ of $\ell$.
We also write
\begin{displaymath} \mathcal{I}_{\delta}(P,\mathcal{L}) := \{(p,\ell) \in P \times \mathcal{L} : p \text{ is $\delta$-incident to } \ell\}. \end{displaymath}
Here is the first main result of the paper:
\begin{thm}\label{mainIntro} Let $P \subset Q_{0}$ and $\mathcal{L} \subset \mathcal{Q}_{0}$ be $\delta$-separated. Then,
\begin{displaymath} |\mathcal{I}_{\delta}(P,\mathcal{L})| \lesssim |P|^{2/3}|\mathcal{L}|^{2/3} \cdot \delta^{-1/3}. \end{displaymath}
Here $|\cdot|$ refers to cardinality on both sides of the inequality. The implicit constant is absolute.  \end{thm}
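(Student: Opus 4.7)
My plan is to imitate the proof of the classical Szemerédi–Trotter theorem, adapted to $\delta$-separated points and lines. The argument has two stages: a dyadic reduction to a ``rich-lines'' estimate, followed by the rich-lines estimate itself, which is the substantive step.

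\emph{Stage 1: Dyadic reduction.} For each dyadic $r \in [1, C\delta^{-1}]$, set $\mathcal{L}_r := \{\ell \in \mathcal{L} : |P \cap \ell(\delta)| \geq r\}$. Because each $\delta$-tube around $\ell \in \mathcal{Q}_0$ can contain at most $O(\delta^{-1})$ $\delta$-separated points, one has $|\mathcal{I}_\delta(P,\mathcal{L})| \lesssim \sum_r r|\mathcal{L}_r|$ with the sum running over dyadic $r \leq C\delta^{-1}$ (any logarithm is removed by a standard pigeonhole). Granting the key rich-lines estimate
\begin{equation*}
|\mathcal{L}_r| \lesssim \frac{|P|^2}{r^3\,\delta} \qquad \text{(plus lower-order terms)}, \tag{$\star$}
\end{equation*}
one combines it with the trivial $|\mathcal{L}_r| \leq |\mathcal{L}|$ and balances the two at the critical scale $r_\ast \sim (|P|^2/(|\mathcal{L}|\delta))^{1/3}$. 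Summing the resulting geometric series, both the $r \leq r_\ast$ part (using $|\mathcal{L}_r| \leq |\mathcal{L}|$) and the $r \geq r_\ast$ part (using $(\star)$) contribute $\lesssim |P|^{2/3}|\mathcal{L}|^{2/3}\delta^{-1/3}$, giving the theorem.

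\emph{Stage 2: Extracting an exact configuration.} To prove $(\star)$, I would reduce to the classical Szemerédi–Trotter theorem by replacing the $\delta$-tubes with a family of \emph{exact} lines. For each $(p,\ell)\in \mathcal{I}_\delta$, let $\ell^\ast_{p,\ell}$ be the unique line through $p$ parallel to $\ell$; note that $\ell^\ast_{p,\ell}$ lies within $\delta$ of $\ell$ in parameter space. Two features of this auxiliary family $\mathcal{L}^\ast$ are crucial: first, the $\delta$-separation of $\mathcal{L}$ ensures that $(p,\ell) \mapsto (p,\ell^\ast_{p,\ell})$ has multiplicity $O(1)$, since two distinct $\delta$-separated lines $\delta$-incident to the same point cannot share a slope more than $O(1)$ times; second, a simple slopes-per-point count gives $|\mathcal{L}^\ast| \lesssim |P|/\delta$. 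Each pair $(p,\ell^\ast_{p,\ell})$ is an \emph{exact} incidence, so the classical Szemerédi–Trotter theorem bounds the number of such pairs. Restricting the construction to rich lines $\ell \in \mathcal{L}_r$ and tracking the multiplicity $r$ per line is intended to produce $(\star)$.

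\emph{The main obstacle.} The substantive difficulty is in Stage 2. A naive Kővári–Sós–Turán (double-counting) approach — counting pairs $(p_1,p_2) \in P \times P$ that are jointly $\delta$-incident to a rich tube — only yields $|\mathcal{L}_r| \lesssim |P|/(r^2\delta^2)$, translating via dyadic summation into the substantially weaker bound $|\mathcal{I}_\delta| \lesssim (|P||\mathcal{L}|)^{1/2}/\delta$. The improvement to $(\star)$ must therefore draw on the genuinely planar, crossing-lemma-style input of Szemerédi–Trotter, and the key technical challenge is to feed this input into the $\delta$-tube setting while preserving the correct $\delta^{-1/3}$ exponent. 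In particular, the simple ``parallel line through $p$'' reduction above gives $|\mathcal{L}^\ast| \lesssim |P|/\delta$, which after exact Szemerédi–Trotter loses a factor of $\delta^{-1/3}$ compared to the target; closing this gap requires either a more delicate control on $|\mathcal{L}^\ast|$ in the rich regime (exploiting that $r$-rich tubes contribute only $\sim r$ auxiliary lines rather than $\delta^{-1}$), or an adapted cell-decomposition argument for $\delta$-tubes in which the cross-cell contributions are carefully controlled. Either route is where I expect the main work of the proof to lie.
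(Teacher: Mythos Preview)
Your Stage~1 is correct and is, up to point--line duality, exactly what the paper does: the paper proves the dual statement $(\star')$ that the set of $\delta$-separated $k$-rich \emph{points} has cardinality $\lesssim |\mathcal{L}|^{2}k^{-3}\delta^{-1}$, and then sums dyadically over $k$ precisely as you sum over $r$. So your reduction to $(\star)$ is sound, and the balancing at $r_{\ast}\sim(|P|^{2}/(|\mathcal{L}|\delta))^{1/3}$ gives the desired $|P|^{2/3}|\mathcal{L}|^{2/3}\delta^{-1/3}$.

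The gap is your Stage~2: you have not proved $(\star)$, and you already see that the ``parallel line through $p$'' reduction to the classical Szemer\'edi--Trotter theorem cannot close it. The obstruction is structural. Any map from $\delta$-incidences to exact incidences that creates at most $O(1)$ exact incidences per $\delta$-incidence must produce a line set $\mathcal{L}^{\ast}$ whose size is comparable to $|\mathcal{I}_{\delta}|$ in the worst case, and then the exact Szemer\'edi--Trotter bound $|P|^{2/3}|\mathcal{L}^{\ast}|^{2/3}$ is circular. Your alternative control $|\mathcal{L}^{\ast}|\lesssim |P|\delta^{-1}$ is tight (take $P$ a $\delta$-grid and $\mathcal{L}$ all $\delta$-separated slopes through each point), so there is no ``more delicate'' count to be had without extra structural input on $P$ or $\mathcal{L}$; this route really does lose $\delta^{-1/3}$.

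The paper follows your second suggested escape and runs a polynomial partitioning argument directly at scale $\delta$. One applies the Guth--Katz cell decomposition with degree $D\sim |\mathcal{L}|/k$ (in the rich-points formulation), splits $P$ into the points in the cell interiors $O\setminus Z(\delta)$ and the points in the $\delta$-neighbourhood $Z(\delta)$ of the zero set, and bounds each piece by $<|P|/2$ under a counter-assumption. The interior part is the standard B\'ezout argument: a line $\delta$-incident to a point of $O\setminus Z(\delta)$ must actually enter $O$, hence crosses at most $D+1$ cells. The $\delta$-boundary part is the genuinely new ingredient you are missing: one shows that for a $k$-rich point $p\in Z(\delta)$, the curve $Z$ must project with length $\gtrsim k\delta^{2}$ onto the normals of a positive fraction of the $\gtrsim k$ tubes through $p$ (using that $\epsilon$-separated lines through $p$ have angular spread $\gtrsim k\epsilon$), and then an averaging over lines in a single rich tube forces too many intersections of $Z$ with parallel lines, contradicting B\'ezout. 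This is where the factor $\delta^{-1}$ in $(\star)$ (equivalently $\epsilon^{-1}$ in the paper) enters, and it does not seem accessible by a black-box call to the exact Szemer\'edi--Trotter theorem.
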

This estimate is a close relative of the Szemer\'edi-Trotter incidence bound \cite{MR729791} which, in our notation, says that $|\mathcal{I}_{0}(P,\mathcal{L})| \lesssim |P|^{2/3}|\mathcal{L}|^{2/3} + |P| + |\mathcal{L}|$ (without any hypotheses on the separation of $P$ or $\mathcal{L}$). The Szemer\'edi-Trotter bound for $|\mathcal{I}_{0}|$ is typically much better than the one in Theorem \ref{mainIntro} for $|\mathcal{I}_{\delta}|$, but this is to be expected. In fact, the bound in Theorem \ref{mainIntro} cannot be improved, unless one assumes stronger separation from either $P$ or $\mathcal{L}$. The simplest non-trivial sharpness example is perhaps given by letting $P$ be a $\delta$-packing in a tube of dimensions $\delta^{1/2} \times \delta$. All of the points in $P$ are $\delta$-incident to a $\delta$-separated family of lines of cardinality $\sim \delta^{-1/2}$, giving $|\mathcal{I}_{\delta}(P,\mathcal{L})| \sim \delta^{-1}$. This matches the upper bound in Theorem \ref{mainIntro}. More generally, sharpness examples are given by letting $P$ be a $\delta$-packing in a rectangle $R = [0,r] \times [0,s] \subset Q_{0}$, with $\delta \leq s \leq r \leq 1$, and letting $\mathcal{L} \subset \mathcal{Q}_{0}$ be a $\delta$-packing of lines meeting $R$.

We will infer Theorem \ref{mainIntro} from an estimate for the number of \emph{$k$-rich points} relative to an $\epsilon$-separated line family, with $\epsilon \geq \delta$, see Theorem \ref{mainRich}. To prove Theorem \ref{mainIntro}, only the case $\epsilon = \delta$ of Theorem \ref{mainRich} is needed; we decided to include the general case $\epsilon \geq \delta$ since the same question has been recently studied by Guth, Solomon, and Wang \cite{MR4034922}. We will comment on the differences between the results in Remark \ref{rem1}. Other related results in the plane are contained in \cite{2020arXiv200111304H,2020arXiv200102551L,MR4055989,2020arXiv200301636S}. In higher dimensions, the problem of bounding the number of $\delta$-incidences between points and lines is at the heart of Kakeya and restriction problems, see \cite{MR2275834,MR3454378,2019arXiv190910693G,MR3830894,2019arXiv190805589H,MR3881832,MR3868003} for a few recent papers.

\subsection{Loomis-Whitney and Gagliardo-Nirenberg-Sobolev inequalities in $\He$} We then move to the Heisenberg group, although we postpone most of the precise definitions to Section \ref{s:LW}. In brief, the first Heisenberg group $\He$ is $\R^{3}$ equipped with a non-commutative group law "$\cdot$" which makes it a nilpotent Lie group. The "vertical" planes in $\R^{3}$ containing the $t$-axis are subgroups of $\He$ -- known as the \emph{vertical subgroups}. To a vertical subgroup $\W \subset \He$, we associate the \emph{complementary horizontal subgroup} $\mathbb{L}$ which, as a subset of $\R^{3}$, is just the orthogonal complement of $\W$, a line in the $xy$-plane. For subsets of $\He \cong \R^{3}$, the notation $|\cdot|$ will refer to Lebesgue measure on $\R^{3}$, and for subsets of a vertical plane $\R^{2} \cong \W \subset \He$, the notation $|\cdot|$ will refer to Lebesgue measure in $\R^{2}$. All integrations on $\He$ or $\W$ will be performed with respect to these measures. Up to multiplicative constants, they could also be defined as the $4$-and $3$-dimensional Hausdorff measures, respectively, relative to a natural metric on $\He$. So, our measures coincide with canonical "intrinsic" objects in $\He$.

 Fixing a pair $(\W,\mathbb{L})$, as above, every point $p \in \He$ can be uniquely decomposed as $p = w \cdot v$, where $w \in \W$ and $v \in \mathbb{L}$. This operation gives rise to the \emph{vertical} and \emph{horizontal projections}
 \begin{displaymath} p \mapsto \pi_{\W}(p) := w \quad \text{and} \quad p \mapsto \pi_{\mathbb{L}}(p) := v. \end{displaymath}
 The vertical projections, in particular, play a significant role in the geometric measure theory of Heisenberg groups -- as do orthogonal projections in $\R^{n}$ -- so they have been actively investigated in recent years, see \cite{MR3047423,MR2955184,MR3992573,MR3495435,2018arXiv181112559H,2020arXiv200204789H}. The vertical projections are non-linear maps, but their \emph{fibres} $\pi_{\W}^{-1}\{w\}$ are nevertheless lines. In fact, the fibres of $\pi_{\W}$ are precisely the left translates of the line $\mathbb{L}$, that is, $\pi_{\W}^{-1}\{w\} = w \cdot \mathbb{L}$ for $w \in \W$.

With this introduction in mind, we are interested in proving a variant of the Loomis-Whitney inequality \eqref{LWIneq} for subsets of $\He$ in terms of the vertical projections $\pi_{\W}$. In $\R^{n}$, the inequality makes a reference to the $n$ coordinate projections. These are, now, best viewed as the projections whose fibres are translates of lines parallel to the coordinate axes. In $\He$, it seems natural to fix a basis for the $xy$-plane, say $e_{1} = (1,0,0)$ and $e_{2} = (0,1,0)$, and consider the two vertical projections $\pi_{1} := \pi_{\W_{1}}$ and $\pi_{2} := \pi_{\W_{2}}$ whose fibres are left translates of $\mathbb{L}_{1} := \spa(e_{1})$ and $\mathbb{L}_{2} := \spa(e_{2})$. The exact formulae are
\begin{displaymath} \pi_{1}(x,y,t) = (0,y,t + \tfrac{xy}{2}) \quad \text{and} \quad \pi_{2}(x,y,t) = (x,0,t - \tfrac{xy}{2}). \end{displaymath}
With this notation, we prove the following variant of the Loomis-Whitney inequality:
\begin{thm}\label{mainIntro2} Let $K \subset \R^{3}$ (or $K \subset \He$) be Lebesgue measurable. Then
\begin{equation}\label{form18} |K| \lesssim |\pi_{1}(K)|^{2/3} \cdot |\pi_{2}(K)|^{2/3}. \end{equation}
\end{thm}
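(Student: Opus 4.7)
The plan is to reduce \eqref{form18} to Theorem \ref{mainIntro} via a $\delta$-scale discretisation. The key geometric observation is that $\pi_{2}$ sends every fibre of $\pi_{1}$ to an affine line in the $xt$-plane: indeed, a direct calculation gives
\begin{equation*}
  \pi_{1}^{-1}\{(y_{0},u_{0})\} = \{(s,y_{0}, u_{0} - sy_{0}/2) : s \in \R\},
\end{equation*}
and $\pi_{2}$ sends this fibre onto $\{(s, u_{0} - sy_{0}) : s \in \R\}$, which is the graph $\ell_{(y_{0},u_{0})} = \{v = -y_{0}x + u_{0}\}$. In particular, $\pi_{2}(p) \in \ell_{\pi_{1}(p)}$ for every $p \in \He$, so points of $\pi_{2}(K)$ are automatically incident (at scale $0$) to the family of lines $\{\ell_{q} : q \in \pi_{1}(K)\}$.

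Since both sides of \eqref{form18} scale identically under the intrinsic $\He$-dilations, I may assume $K \subset [-1,1]^{3}$. Fixing $0 < \delta < 1$, let $P$ be a maximally $\delta$-separated subset of $\pi_{2}(K)$, and let $\mathcal{L} = \{\ell_{q} : q \in \tilde{Q}\}$ where $\tilde{Q} \subset \pi_{1}(K)$ is maximally $\delta$-separated. After a harmless fixed rescaling one arranges $P \subset Q_{0}$ and $\mathcal{L} \subset \mathcal{Q}_{0}$, and maximality yields
\begin{equation*}
  |P| \sim |\pi_{2}(K)| \cdot \delta^{-2}, \qquad |\mathcal{L}| \sim |\pi_{1}(K)| \cdot \delta^{-2}.
\end{equation*}

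Let $N_{\delta}(K)$ denote the number of $\delta$-cubes meeting $K$. For each such cube, choose $p \in K$ inside it; by maximality there exist $\tilde{q} \in \tilde{Q}$ and $\tilde{p} \in P$ with $|\tilde{q} - \pi_{1}(p)| \lesssim \delta$ and $|\tilde{p} - \pi_{2}(p)| \lesssim \delta$. Since $\pi_{2}(p)$ lies exactly on $\ell_{\pi_{1}(p)}$, the point $\tilde{p}$ is $C\delta$-incident to $\ell_{\tilde{q}}$, producing an element of $\mathcal{I}_{C\delta}(P,\mathcal{L})$. The cube-to-incidence map is $O(1)$-to-one: the $\pi_{1}$-fibre through $p$ has direction $(1,0,-y/2)$ and the $\pi_{2}$-fibre has direction $(0,1,x/2)$, which span an angle bounded away from $0$ on $[-1,1]^{3}$, so the intersection $\pi_{1}^{-1}(B(\tilde{q}, C\delta)) \cap \pi_{2}^{-1}(B(\tilde{p}, C\delta))$ is contained in a parallelepiped of volume $\lesssim \delta^{3}$. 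Applying Theorem \ref{mainIntro} (with tolerance $C\delta$, which costs only a constant) therefore gives
\begin{equation*}
  N_{\delta}(K) \lesssim |\mathcal{I}_{C\delta}(P,\mathcal{L})| \lesssim |P|^{2/3}|\mathcal{L}|^{2/3} \delta^{-1/3} \sim |\pi_{1}(K)|^{2/3}|\pi_{2}(K)|^{2/3} \cdot \delta^{-3},
\end{equation*}
and multiplying by $\delta^{3}$ (since $|K| \lesssim N_{\delta}(K) \delta^{3}$) and sending $\delta \to 0$ yields \eqref{form18}.

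The step I expect to be most delicate is the multiplicity bound. It hinges on quantifying the transversality of the two foliations by $\pi_{i}$-fibres, an angle which degenerates as $(x,y) \to \infty$; this is precisely why the preliminary reduction to a fixed compact set (here $[-1,1]^{3}$) must be carried out before the discretisation. The remaining pieces---passing from $\delta$-cube counts to Lebesgue measure by outer regularity, and controlling the parameter ranges of the line family $\mathcal{L}$---are routine and only generate absolute multiplicative constants.
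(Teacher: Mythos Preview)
Your proposal is correct and follows essentially the same route as the paper: reduce to a compact subset of a fixed cube via Heisenberg dilations, choose maximal $\delta$-separated subsets of the two projections, use the key observation that $\pi_{2}$ maps each $\pi_{1}$-fibre to an affine line so that the problem becomes a $\delta$-incidence count, bound the cube-to-incidence multiplicity via the transversality of the two horizontal foliations (the paper packages this as Lemma~\ref{l:tubes} and Corollary~\ref{cor1}), and apply Theorem~\ref{mainIntro}. The only place to tighten is the claimed two-sided estimate $|P| \sim |\pi_{2}(K)|\,\delta^{-2}$, which fails when $|\pi_{2}(K)|$ is small relative to $\delta$; the paper handles this by writing $\delta^{2}|P| \lesssim |\pi_{2}(K)| + \varepsilon$ for $\delta$ small and letting $\varepsilon \to 0$ at the end, a routine fix along the lines you already indicate.
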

Theorem \ref{mainIntro2} will be derived as a corollary of Theorem \ref{mainIntro}. It is easy to see that the exponents in \eqref{form18} are sharp by considering  rectangles of the form $[-r,r] \times [-r,r] \times [-r^{2},r^{2}]$. Besides the difference in the definition of projections, there is another obvious difference between (the case $n = 3$ of) the standard Loomis-Whitney inequality \eqref{LWIneq}, and \eqref{form18}: the former bounds the volume of $K$ in terms of three projections, and the latter in terms of only two projections. One might therefore ask: is there a version of \eqref{LWIneq} for two orthogonal projections $\R^{3} \to \R^{2}$ -- and does it look like \eqref{form18}? The answer is negative. This is a very special case of \cite[Theorem 1.13]{MR2377493}, but perhaps it is illustrative to see an explicit computation:
\begin{ex} Consider the two coordinate projections $\tilde{\pi}_{1},\tilde{\pi}_{2}$ in $\R^{3}$ to the $xt$- and $yt$-planes. If $K = [0,1]^{2} \times [0,\delta]$, then $|K| = \delta$, and also $|\tilde{\pi}_{1}(K)| = \delta = |\tilde{\pi}_{2}(K)|$. So, for $\delta > 0$ small, an inequality of the form
\begin{equation}\label{form19} |K| \lesssim |\tilde{\pi}_{1}(K)|^{\lambda} \cdot |\tilde{\pi}_{2}(K)|^{\lambda} \end{equation}
can only hold for $\lambda \leq \tfrac{1}{2}$. On the other hand, if $K_{R} = [0,R]^{3}$, with $R \gg 1$, then $|K_{R}| = R^{3}$ and $|\tilde{\pi}_{1}(K_{R})| = R^{2} = |\tilde{\pi}_{2}(K_{R})|$, so \eqref{form19} can only hold for $\lambda \geq \tfrac{3}{4}$. The latter example naturally does not contradict \eqref{form18}: note that $|\pi_{j}(K_{R})| \sim R^{3}$ for $R \gg 1$. \end{ex}

We also mention that Theorem \ref{mainIntro2} is related to \emph{Brascamp-Lieb inequalities}, but, to the best of our knowledge, does not follow from existing results. We direct the reader to e.g. \cite{2018arXiv181111052B,MR2377493,MR412366} and the references therein.

In $\R^{n}$, it is well-known that the Loomis-Whitney inequality implies the \emph{Gagliardo-Nirenberg-Sobolev inequality}
\begin{equation}\label{GSN} \|f\|_{n/(n - 1)} \leq \prod_{j = 1}^{n} \|\partial_{j}f\|_{1}^{1/n}, \qquad f \in C^{1}_{c}(\R^{n}). \end{equation}
Similarly, we obtain an $\He$-analogue of \eqref{GSN} as a corollary of Theorem \ref{mainIntro2}:
\begin{thm}\label{mainIntro3} Let $f \in BV(\He)$. Then,
\begin{equation}\label{GSNHe} \|f\|_{4/3} \lesssim \sqrt{\|Xf\|\|Yf\|}. \end{equation}
\end{thm}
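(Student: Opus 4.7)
The plan is to deduce Theorem \ref{mainIntro3} from the Heisenberg Loomis-Whitney inequality (Theorem \ref{mainIntro2}) by mimicking the classical derivation of the Gagliardo-Nirenberg-Sobolev inequality from Loomis-Whitney: first upgrade Theorem \ref{mainIntro2} to an isoperimetric-type inequality for finite-perimeter sets, and then pass to general $BV$-functions via a layer-cake representation combined with the coarea formula for each horizontal vector field. Replacing $f$ by $|f|$ (which leaves $\|f\|_{4/3}$ unchanged while not increasing $\|Xf\|$ or $\|Yf\|$, by the chain rule in $BV(\He)$), I may assume $f \geq 0$.

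\emph{From Loomis-Whitney to isoperimetry.} The key intermediate estimate is
\begin{equation}\label{isoperimPlan} |E|^{3/4} \lesssim \sqrt{\|X\chi_E\|\,\|Y\chi_E\|} \end{equation}
for every set $E\subset\He$ with $\chi_E \in BV(\He)$. Combined with Theorem \ref{mainIntro2}, this reduces to the projection-versus-perimeter bounds $|\pi_1(E)| \lesssim \|X\chi_E\|$ and $|\pi_2(E)| \lesssim \|Y\chi_E\|$. To establish the first of these, I would change coordinates by $(x,y,t)\mapsto(x,y,t+\tfrac{xy}{2})$, a volume-preserving smooth diffeomorphism of $\R^3$ in which $X$ becomes $\partial_x$ and $\pi_1$ becomes the standard Euclidean projection onto the $(y,t)$-plane. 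In these coordinates the inequality becomes the familiar fact that for every $(y,t)\in\pi_1(E)$ the line $\pi_1^{-1}\{(y,t)\}$ both enters and exits $E$, so the one-dimensional total variation of $\chi_E$ along the fiber is at least $2$; Fubini over the fibers delivers the bound. The second inequality is symmetric, via $(x,y,t)\mapsto(x,y,t-\tfrac{xy}{2})$.

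\emph{From isoperimetry to Sobolev.} Writing $f=\int_0^{\infty}\chi_{E_s}\,ds$ with $E_s:=\{f>s\}$, the Minkowski integral inequality in $L^{4/3}(\He)$, the estimate \eqref{isoperimPlan}, and the Cauchy-Schwarz inequality yield
\begin{align*}
\|f\|_{4/3}\ \leq\ \int_0^{\infty}|E_s|^{3/4}\,ds\ &\lesssim\ \int_0^{\infty}\sqrt{\|X\chi_{E_s}\|\,\|Y\chi_{E_s}\|}\,ds \\
&\leq\ \sqrt{\int_0^{\infty}\|X\chi_{E_s}\|\,ds\,\cdot\,\int_0^{\infty}\|Y\chi_{E_s}\|\,ds}.
\end{align*}
The argument closes by invoking the single-direction coarea formulas
\[ \|Xf\|=\int_0^{\infty}\|X\chi_{E_s}\|\,ds\qquad\text{and}\qquad \|Yf\|=\int_0^{\infty}\|Y\chi_{E_s}\|\,ds, \]
which again follow from their classical Euclidean counterparts for $\partial_x$ and $\partial_y$ after the coordinate changes used above. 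The main technical obstacle is precisely this translation between horizontal $BV$-theory on $\He$ and Euclidean $BV$-theory on $\R^3$: the change of variables is \emph{not} a group isomorphism, but because it is a volume-preserving $C^{\infty}$-diffeomorphism that turns $X$ (resp.\ $Y$) into a Euclidean partial derivative, both the projection-perimeter inequality and the single-direction coarea formula reduce to textbook statements, so the remainder of the argument is soft once that translation has been made explicit.
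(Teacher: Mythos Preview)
Your argument is correct and takes a genuinely different route from the paper's. Both derive \eqref{GSNHe} from Theorem \ref{mainIntro2} by applying it to level sets of $f$, but the mechanics differ. The paper reduces to $f \in C^{1}_{c}(\R^{3})$, decomposes dyadically via $F_{k} = \{2^{k-1} \leq |f| \leq 2^{k}\}$, and proves by a direct fiber integration (Lemma \ref{l:sobolev}) that $|\pi_{x}(F_{k})| \lesssim 2^{-k}\int_{F_{k-1}}|Yf|$ and symmetrically; the conclusion then follows from Cauchy--Schwarz on the dyadic sum together with $\ell^{1} \hookrightarrow \ell^{4/3}$. You instead establish the intermediate isoperimetric inequality $|E|^{3/4} \lesssim (\|X\chi_{E}\|\,\|Y\chi_{E}\|)^{1/2}$, and then run the continuous layer-cake with Minkowski's integral inequality, Cauchy--Schwarz in the level parameter $s$, and the single-direction coarea identities. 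Your coordinate-change device---that each of $X$, $Y$ is individually conjugate to a Euclidean partial derivative by a volume-preserving diffeomorphism of $\R^{3}$---neatly reduces both the projection-versus-perimeter bound and the directional coarea formula to standard Euclidean facts; the paper's Lemma \ref{l:sobolev} is effectively the same fiber computation carried out by hand for smooth $f$, so it never needs BV-slicing or coarea. Your route is more conceptual and produces the isoperimetric statement as a byproduct; the paper's is more self-contained, staying in the smooth category until the final density approximation.
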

Here
\begin{displaymath} X = \partial_{x} - \tfrac{y}{2}\partial_{t} \quad \text{and} \quad Y = \partial_{y} + \tfrac{x}{2}\partial_{t} \end{displaymath}
are the standard left-invariant "horizontal" vector fields in $\He$, and $BV(\He)$ refers to functions $f \in L^{1}(\He)$ whose distributional $X$ and $Y$ derivatives are signed Radon measures with finite total variation, denoted $\|\cdot\|$. Theorem \ref{mainIntro3} presents a sharper version of the well-known "geometric" Sobolev inequality
\begin{equation}\label{sobolev} \|f\|_{4/3} \lesssim \|\nabla_{\He}f\|, \qquad f \in BV(\He), \end{equation}
proven by Pansu \cite{MR676380} as a corollary of the
isoperimetric inequality in $\He$. Here $\nabla_{\He}f = (Xf,Yf)$.  Versions of geometric Sobolev inequalities and isoperimetric inequalities were obtained in a more general framework by several authors, for instance in \cite{MR1312686,MR1404326}.
A proof of
\eqref{sobolev}, using the fundamental solution of the sub-Laplace
operator $\bigtriangleup_{\He}$, is discussed in \cite[Section
5.3]{MR2312336}, following the approach of \cite{MR1312686}.
%More generally, \cite[\S 1 and p.210]{MR1312686}
%shows that representation formulae in terms of fundamental
%solutions of sub-Laplacians can be used to derive isoperimetric
%and geometric Sobolev inequalities for smooth vector fields
%satisfying H\"ormander's condition.
On the other hand, since Theorem \ref{mainIntro3}
is derived from Theorem \ref{mainIntro2}, which in turn is a
corollary of Theorem \ref{mainIntro}, our proof of the inequality
\eqref{GSNHe}, and hence \eqref{sobolev}, uses nothing but plane
geometry!

It seems plausible that a version of Theorem \ref{mainIntro2} could also hold in higher dimensional Heisenberg groups, but we are not currently able to prove it:
\begin{question} Let $K \subset \He^{n} \cong \R^{2n + 1}$ be Lebesgue measurable, and let $\pi_{1},\ldots,\pi_{2n}$ be the vertical projections to the planes perpendicular to the $2n$ standard unit vectors $e_{j} \in \R^{2n} \times \{0\} \subset \R^{2n + 1}$. Then,
\begin{displaymath} |K| \lesssim \prod_{j = 1}^{2n} |\pi_{j}(K)|^{(n + 1)/[n(2n + 1)]}. \end{displaymath}
\end{question}

%%%%%%%%%%%%%%%%%%%%%%%%%%%%%%%%%%%%%%%%%%%%%%%

\section{An incidence estimate in the plane}

%\textbf{Notation.}  In this section, the symbol "$|\cdot|$" stands
%for the cardinality or the Lebesgue measure of a set. The meaning
%of the notation should always be clear of the context.

\begin{definition}[A metric on lines] Let $\mathcal{Q}_{0}$ be the set of lines in $\R^{2}$ whose slope does not exceed $45^{\circ}$, and which intersect the $y$-axis in $\{0\} \times [-1,1]$:
\begin{displaymath} \mathcal{Q}_{0} := \{\ell_{(a,b)} := \{(x,y)\in \mathbb{R}^2\colon y = ax + b\} : |a|,|b| \leq 1\}. \end{displaymath}
For $\ell_{(a,b)},\ell_{(c,d)} \in \mathcal{Q}_{0}$, write
\begin{displaymath} d(\ell_{(a,b)},\ell_{(c,d)}) := |(a,b) - (c,d)|. \end{displaymath}
\end{definition}
Let $0 < \delta \leq 1$. A point $p \in \R^{2}$ is
\emph{$\delta$-incident} to a line $\ell \subset \R^{2}$ if $p\in
\ell(\delta)$. The parameter $\delta
> 0$ will be fixed in this section, and the $\delta$-incidence of
$p$ and $\ell$ will be denoted $p \sim \ell$. For another parameter $\epsilon \in [\delta,1]$, we say that two lines $\ell_{1},\ell_{2} \in
\mathcal{Q}_{0}$ are called \emph{$\epsilon$-separated} if
$d(\ell_{1},\ell_{2}) \geq \epsilon$. The point here is that we will only ever consider $\delta$-incidences between points and lines, but sometimes the results can be improved by assuming that the lines are $\epsilon$-separated, and not just $\delta$-separated.

We record a fairly obvious lemma:
\begin{lemma}\label{l:star} Let $0 < \delta \leq \epsilon < 1$. Let $\mathcal{L} \subset \mathcal{Q}_{0}$ be an $\epsilon$-separated family of lines, all $\delta$-incident to a common point $p \in Q_{0} := [-1,1]^{2}$. Then $|\mathcal{L}| \leq A\epsilon^{-1}$, where $A \geq 1$ is an absolute constant. \end{lemma}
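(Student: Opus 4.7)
The plan is to reduce the problem to a standard area-packing estimate in the parameter space of lines. First I would translate the incidence condition $p \sim \ell_{(a,b)}$ into an inequality on $(a,b)$. Writing $p = (p_{1},p_{2})$, the Euclidean distance from $p$ to $\ell_{(a,b)}$ is $|ap_{1} + b - p_{2}|/\sqrt{1 + a^{2}}$, so $p \sim \ell_{(a,b)}$ is equivalent to
\begin{displaymath} |ap_{1} + b - p_{2}| \leq \delta\sqrt{1 + a^{2}} \leq \sqrt{2}\,\delta. \end{displaymath}
Thus the parameters of all lines in $\mathcal{L}$ lie in a closed strip $S_{p} \subset \R^{2}$ centered on the affine line $\{b = p_{2} - p_{1}a\}$, of width at most $2\sqrt{2}\,\delta$. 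Intersected with $Q_{0} = [-1,1]^{2}$, the strip $S_{p} \cap Q_{0}$ has diameter $\lesssim 1$ and width $\lesssim \delta$, hence area $\lesssim \delta$.

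Second, I would run a disjoint-disk packing argument in parameter space. The $\epsilon$-separation hypothesis means that the open Euclidean disks of radius $\epsilon/2$ centered at $\{(a,b) : \ell_{(a,b)} \in \mathcal{L}\}$ are pairwise disjoint. Each such disk is contained in the $(\epsilon/2)$-neighborhood of $S_{p} \cap Q_{0}$, which (using $\delta \leq \epsilon \leq 1$) has area $\lesssim \epsilon$. Comparing the total area of the disks to this bound,
\begin{displaymath} |\mathcal{L}| \cdot \tfrac{\pi \epsilon^{2}}{4} \lesssim \epsilon, \end{displaymath}
so $|\mathcal{L}| \lesssim \epsilon^{-1}$, which is the desired conclusion with an absolute constant $A$.

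There is no substantive obstacle: the argument is a routine volume/area count. The only minor point deserving attention is converting the geometric condition $p \in \ell(\delta)$, defined via Euclidean distance in $\R^{2}$, into a linear inequality on the parameters $(a,b)$. This is harmless because the normalization factor $\sqrt{1 + a^{2}}$ is bounded above by $\sqrt{2}$ on $\mathcal{Q}_{0}$, and bounded below by $1$; the distortion between the metric $d$ on $\mathcal{Q}_{0}$ and the Euclidean distance from $p$ to the line $\ell$ is therefore controlled by an absolute constant.
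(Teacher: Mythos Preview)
Your proposal is correct and follows essentially the same approach as the paper: both arguments translate the incidence condition $p \in \ell_{(a,b)}(\delta)$ into the statement that the parameter $(a,b)$ lies within distance $\lesssim \delta$ of the dual line $\{b = p_{2} - p_{1}a\}$ in $Q_{0}$, and then observe that an $\epsilon$-separated set in such a strip has cardinality $\lesssim \epsilon^{-1}$. The paper states this last step in one sentence, whereas you spell it out as a disjoint-disks area count, but the content is the same.
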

\begin{proof} Write $p = (x_{0},y_{0}) \in Q_{0}$. For every $\ell = \ell_{(a,b)} \in \mathcal{L}$, there exists $x_{\ell} \in \R$ such that $|x_{\ell} - x_{0}| + |y_{0} - (ax_{\ell} + b)| \lesssim \delta$. Since $|a| \leq 1$, it follows that $|b - (-x_{0}a + y_{0})| \lesssim \delta$
%. Since also $(x_{0},y_{0}) \in Q_{0}$, we see that $(a,b) \in B(0,5)$,
and hence $(a,b)$ also lies at distance $\lesssim \delta$ from the
line $\{y = -x_{0}x + y_{0}\}$. Noting that $\epsilon \geq \delta$, there can be at most $\lesssim
\epsilon^{-1}$ such $\epsilon$-separated choices of $(a,b)$, as
claimed.  \end{proof} Our restriction to the lines in
$\mathcal{Q}_{0}$ is purely a matter of convenience; it allows us
to define the metric $d$ in a neat way, which (i) corresponds to
the "geometric intuition" of what the $\delta$-separation of lines
should mean, and (ii) behaves well under point-line-duality.

For a (finite) set $P \subset \R^{2}$, and a (finite) family of
lines $\mathcal{L}$ in $\R^{2}$, we write
\begin{displaymath} \mathcal{I}(P,\mathcal{L}) := \{(p,\ell) : p \sim \ell\} = \{(p,\ell) : p \in \ell(\delta)\}.  \end{displaymath}
Here is the main result of this section:
\begin{thm}\label{mainIncidence} Let $P \subset Q_{0} := [-1,1]^{2}$ be a $\delta$-separated set, and let $\mathcal{L} \subset \mathcal{Q}_{0}$ be a $\delta$-separated family of lines. Then,
\begin{displaymath} |\mathcal{I}(P,\mathcal{L})| \lesssim |P|^{2/3}|\mathcal{L}|^{2/3} \cdot \delta^{-1/3}. \end{displaymath}
\end{thm}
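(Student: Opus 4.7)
The plan is to deduce Theorem \ref{mainIncidence} from a Szemer\'edi--Trotter-type bound on the number of $k$-rich points at scale $\delta$. Concretely, I would aim to show that if $\mathcal{L} \subset \mathcal{Q}_0$ is $\delta$-separated and $k \geq 2$, then any $\delta$-separated subset of the points in $Q_0$ each $\delta$-incident to at least $k$ lines of $\mathcal{L}$ has cardinality
\[
\lesssim \frac{|\mathcal{L}|^2}{k^3 \delta},
\]
plus trivial lower-order terms. Granted such a bound, the theorem follows by a layer-cake decomposition: setting $r(p) := \#\{\ell \in \mathcal{L} : p \sim \ell\}$ and $P_{\geq k} := \{p \in P : r(p) \geq k\}$, the identity $|\mathcal{I}(P,\mathcal{L})| = \sum_{k \geq 1} |P_{\geq k}|$, combined with $|P_{\geq k}| \leq \min(|P|, C|\mathcal{L}|^2/(k^3\delta))$ and a dyadic split at the threshold $k_0 \sim (|\mathcal{L}|^2/(|P|\delta))^{1/3}$, produces the claimed $|P|^{2/3}|\mathcal{L}|^{2/3}\delta^{-1/3}$: each half of the sum contributes the main term geometrically. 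The sum is truncated at $k_{\max} \lesssim 1/\delta$ by Lemma \ref{l:star}, so no logarithmic losses appear.

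For the $k$-rich bound itself, I would adapt Sz\'ekely's crossing-number argument to $\delta$-thickened tubes. Let $m$ denote the number of $\delta$-separated $k$-rich points and form the graph $G$ on these points whose edges join consecutive rich points along each intersection $P_{\geq k} \cap \ell(\delta)$ (consecutive after projecting to the axis of $\ell$). Every line contributes at least $r(\ell)-1$ edges, so the edge count satisfies $E \gtrsim mk - |\mathcal{L}|$. On the other hand, two $\delta$-separated tubes $\ell(\delta), \ell'(\delta)$ meeting at angle $\theta \gtrsim \delta$ intersect in a rhombus of area $\sim \delta^2/\theta$ containing $\lesssim 1/\theta$ $\delta$-separated points, and a dyadic sum over $\theta \in [\delta, 1]$ caps the total number of edge-crossings in $G$ by $C|\mathcal{L}|^2/\delta$. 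Sz\'ekely's inequality $\mathrm{cr}(G) \gtrsim E^3/V^2$ then yields $mk^3 \lesssim |\mathcal{L}|^2/\delta$ in the main regime $E \gtrsim mk \geq 4V$; the complementary regime $mk \lesssim |\mathcal{L}|$ is trivial.

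The main obstacle I anticipate is controlling nearly parallel tubes, i.e.\ pairs with angle $\theta \lesssim \delta$: such tubes can overlap along a segment of length $\sim 1$ and contribute $\sim 1/\delta$ edge-crossings per pair, much worse than the generic $1/\theta$ bound. Here the $d$-separation of $\mathcal{L}$ forces that whenever two slopes are within $\delta$, the corresponding intercepts are $\delta$-separated, so only boundedly many nearly parallel tubes can coexist in a fixed $\delta$-wide strip. Exploiting this to cap the parallel contribution --- likely via a pigeonhole over direction-buckets of width $\delta$ combined with Lemma \ref{l:star} applied to a suitable dual configuration --- is the delicate step, and the place where the precise $d$-metric on $\mathcal{Q}_0$ really enters the argument.
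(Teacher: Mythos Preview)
Your reduction from the incidence bound to a $k$-rich--points estimate via dyadic summation matches the paper's argument (the paper additionally invokes point--line duality to arrange $|\mathcal{L}| \geq |P|$, which guarantees the threshold $k_0 \gtrsim 1$). The divergence is in the proof of the rich-points bound itself: the paper establishes $|P_{\geq k}| \lesssim |\mathcal{L}|^{2}/(k^{3}\delta)$ via the Guth--Katz polynomial cell decomposition, splitting the rich points into those deep inside cells and those in the $\delta$-neighbourhood of the zero set, and handling the latter with Harnack's curve theorem and a projection argument. It does not use crossing numbers.

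Your Sz\'ekely-style approach has a genuine obstruction that you have not identified, and it is not the nearly-parallel tubes. (That worry is actually benign: $\delta$-separation in the $d$-metric forces that, for each $\ell \in \mathcal{L}$, only $O(1)$ other $\ell'\in\mathcal{L}$ satisfy both $\angle(\ell,\ell')<\delta$ and $\ell(\delta)\cap\ell'(\delta)\cap Q_0 \neq \emptyset$; those $O(|\mathcal{L}|)$ pairs contribute only $O(|\mathcal{L}|/\delta)$ crossings.) The real problem is \emph{edge multiplicity} in your graph $G$. Two rich points $p,q \in Q_0$ at distance $\sim\delta$ can be simultaneously $\delta$-incident to $\sim\delta^{-1}$ lines of $\mathcal{L}$: in the dual $(a,b)$-plane the set of parameters incident to both is the intersection of two $\delta$-strips at angle $\sim|x_p-x_q|$, which for $|x_p-x_q|\lesssim\delta$ has length $\sim 1$. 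Hence $p$ and $q$ may be consecutive along $\sim\delta^{-1}$ tubes, so the maximum edge multiplicity $m_{\max}$ of $G$ can be $\sim\delta^{-1}$. Sz\'ekely's multigraph crossing lemma reads $\mathrm{cr}(G) \gtrsim E^{3}/(m_{\max}\,V^{2})$ under the hypothesis $E \gtrsim m_{\max}V$; with $m_{\max}\sim\delta^{-1}$ the resulting bound is $|P_{\geq k}| \lesssim |\mathcal{L}|^{2}/(k^{3}\delta^{2})$, a full factor $\delta^{-1}$ too weak, and the hypothesis itself forces $k\gtrsim \delta^{-1}$, which by Lemma~\ref{l:star} leaves no usable range. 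Restricting first to points of richness exactly $\sim k$ caps $m_{\max}$ at $k$, but then $E \sim kV$ is again borderline for the lemma and yields only $|P_{\geq k}| \lesssim |\mathcal{L}|^{2}/(k^{2}\delta)$. This multiplicity obstruction is precisely why the paper turns to the polynomial method rather than crossing numbers.
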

Theorem \ref{mainIncidence} will be derived as a corollary of the
following reformulation, which bounds the number of \emph{$k$-rich
points} for a given $\epsilon$-separated line family $\mathcal{L}$
in $\R^{2}$. For Theorem \ref{mainIncidence}, we will only need the case $\epsilon = \delta$, but proving the more general statement presents no additional challenges. Given a line family $\mathcal{L}$, and an integer $k \geq 1$,
a point $p \in \R^{2}$ is is called \emph{$k$-rich} (relative to $\mathcal{L}$) if $p \sim
\ell$ for $\geq k$ distinct $\ell \in \mathcal{L}$.
\begin{thm}\label{mainRich} Let $0 < \delta \leq \epsilon \leq 1$. Let $\mathcal{L} \subset \mathcal{Q}_{0}$ be an $\epsilon$-separated family of lines, and let $P \subset Q_{0}$ be a $\delta$-separated set of $k$-rich points (relative to $\mathcal{L}$) with $k \geq 2$. Then,
\begin{equation}\label{eq:rich} |P| \lesssim \frac{|\mathcal{L}|^{2}}{k^{3}} \cdot \epsilon^{-1}. \end{equation}
\end{thm}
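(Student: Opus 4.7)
The plan is to follow Sz\'ekely's proof of the Szemer\'edi--Trotter theorem via the crossing number inequality, adapting it to the $\delta$-thickened setting. I expect the only non-classical step to be the geometric estimate on crossings per line pair; everything else goes through as in the exact case.

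First I would set $I := \sum_{p \in P} r(p)$, where $r(p) = |\{\ell \in \mathcal{L} : p \sim \ell\}| \geq k$, and form an abstract graph $G$ with $V(G) = P$ by ordering the points of $P \cap \ell(\delta)$ along each line $\ell$ (say by $x$-coordinate; this is well-defined since slopes in $\mathcal{Q}_{0}$ are bounded by $1$, after a generic perturbation of $P$ to make $x$-coordinates distinct) and joining consecutive points by straight line segments, so that $|E(G)| \geq I - |\mathcal{L}|$. If $I \leq 2|\mathcal{L}|$, then $|P| \leq 2|\mathcal{L}|/k$; Lemma~\ref{l:star} gives $k\epsilon \lesssim 1$, so $|\mathcal{L}| \geq k \gtrsim k^{2}\epsilon$, and \eqref{eq:rich} follows immediately. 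Otherwise $|E(G)| \geq I/2 \geq k|P|/2$.

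A standard second moment / Cauchy--Schwarz computation, based on the pair bound $\sum_{\ell \neq \ell'}|P \cap \ell(\delta) \cap \ell'(\delta)| \lesssim |\mathcal{L}|^{2}/\epsilon$ (see below), yields $|P| \lesssim |\mathcal{L}|^{2}/(k^{2}\epsilon)$, which suffices for bounded $k \leq 16$. For $k$ above that threshold, $|E(G)| \geq k|P|/2 \geq 4|V(G)|$, and the crossing number inequality gives
\begin{displaymath} \mathrm{cr}(G) \gtrsim |E(G)|^{3}/|V(G)|^{2} \gtrsim k^{3}|P|. \end{displaymath}

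The heart of the argument is a geometric upper bound: for each pair of $\epsilon$-separated lines $\ell_{1},\ell_{2}$ whose $\delta$-tubes intersect (so the angle $\theta = \theta(\ell_{1},\ell_{2}) \gtrsim \epsilon$),
\begin{displaymath} \mathrm{cr}(\ell_{1},\ell_{2}) \lesssim 1/\theta \leq 1/\epsilon. \end{displaymath}
To prove this, I would represent each path on $\ell_{j}$ as the graph of a piecewise linear function $y_{j}(x)$. Since the path is $x$-monotone and each segment lies in $\ell_{j}(\delta)$, one has $|y_{j}(x) - (a_{j}x + b_{j})| \lesssim \delta$, so
\begin{displaymath} y_{1}(x) - y_{2}(x) = (a_{1} - a_{2})x + (b_{1} - b_{2}) + O(\delta), \end{displaymath}
a PL function whose zeros correspond exactly to crossings of the two paths in the plane. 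Such zeros can occur only in the $x$-range of the tube-overlap parallelogram, which has length $\lesssim \delta/\theta$; in that range either $y_{j}$ has at most $\lesssim 1/\theta$ pieces (bounded by the number of $\delta$-separated points on a segment of length $\delta/\theta$), so $y_{1}-y_{2}$ has at most $\lesssim 1/\theta$ linear pieces and hence at most $\lesssim 1/\theta$ zeros. Summing trivially over pairs gives $\mathrm{cr}(G) \lesssim |\mathcal{L}|^{2}/\epsilon$, and combining this with the crossing lemma yields $k^{3}|P| \lesssim |\mathcal{L}|^{2}/\epsilon$, which rearranges to \eqref{eq:rich}. The Cauchy--Schwarz pair bound used earlier is a special case of the same computation.

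The hard part will be the per-pair crossing estimate. In the classical non-thickened setting two straight lines cross at most once, but here two polygonal edge-paths could a priori cross many times inside their common tube overlap; the PL / zero-counting argument above, crucially using $x$-monotonicity of both paths and the fact that each path tracks its defining line to within $O(\delta)$, is exactly what keeps the per-pair count at $O(1/\theta)$.
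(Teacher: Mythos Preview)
Your approach via Sz\'ekely's crossing-number method is genuinely different from the paper's polynomial-partitioning proof, but it contains a real gap that I do not see how to close without changing the method.

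The problem is edge multiplicity. In the classical Szemer\'edi--Trotter setting the incidence graph is \emph{simple}, because two distinct points lie on at most one common line; this is what makes the crossing lemma $\mathrm{cr}(G)\gtrsim |E|^{3}/|V|^{2}$ applicable. In the $\delta$-thickened setting this fails badly: a pair of $\delta$-separated points $p,q\in Q_{0}$ at distance $\sim\delta$ can lie in $\ell(\delta)$ for as many as $\sim \epsilon^{-1}$ lines $\ell$ of an $\epsilon$-separated family (the parameters $(a,b)$ lie in a $\sim\delta\times 1$ strip). So your graph $G$ is a multigraph with maximal multiplicity $m$ that can be as large as $\epsilon^{-1}$. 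The multigraph crossing lemma then reads $\mathrm{cr}(G)\gtrsim |E|^{3}/(m|V|^{2})$, and combining this with your (correct) upper bound $\mathrm{cr}(G)\lesssim |\mathcal L|^{2}/\epsilon$ yields only
\[
|P|\ \lesssim\ \frac{|\mathcal L|^{2}}{k^{3}}\cdot \epsilon^{-2},
\]
off by a full factor of $\epsilon^{-1}$. Worse, the hypothesis $|E|\gtrsim m|V|$ of the multigraph lemma becomes $k\gtrsim \epsilon^{-1}$, while Lemma~\ref{l:star} forces $k\lesssim \epsilon^{-1}$; so you are exactly at the threshold where the lemma gives nothing. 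A concrete obstruction: take $P$ to be $N$ consecutive $\delta$-spaced points on the $x$-axis and $\mathcal L$ to be $M\sim 1/(N\delta)$ lines through the origin with slopes $0,\delta,2\delta,\ldots$; every point is $\sim M$-rich, every edge of $G$ has multiplicity $M$, and the drawing has zero crossings.

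A secondary remark: your parenthetical ``so the angle $\theta\gtrsim\epsilon$'' is not literally correct when $\epsilon\sim\delta$ (parallel $\epsilon$-separated tubes can overlap), although your conclusion $\mathrm{cr}(\ell_{1},\ell_{2})\lesssim\epsilon^{-1}$ survives with a small extra argument. This is easily patched; the multiplicity issue is not. The paper circumvents it entirely by using the Guth--Katz polynomial cell decomposition, which has no analogue of the simplicity hypothesis.
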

\begin{remark}\label{rem1} First, we mention that the definition of "$p \sim \ell$" could also be modified by requiring that $p \in \ell(C\delta)$, where $C \geq 1$ is a fixed constant. Then both Theorems \ref{mainIncidence} and \ref{mainRich} would continue to hold, with the same proofs but with a worse constant, depending only on $C$.

Second, the bound \eqref{eq:rich} should be compared with the next
classical estimate, which follows from the Szemer\'edi-Trotter
incidence theorem \cite{MR729791}: given a family of lines $\mathcal{L}$ in
$\R^{2}$, the set of points in $\R^{2}$ contained on $\geq k$
lines has cardinality
\begin{equation}\label{form10} \lesssim \frac{|\mathcal{L}|^{2}}{k^{3}} + \frac{|\mathcal{L}|}{k}. \end{equation}
It seems suspicious that \eqref{eq:rich} is completely missing the
second term in \eqref{form10}, which is indeed necessary: think of "$k$-stars", where $|\mathcal{L}|/k$
points, each, lie on $k$ lines in $\mathcal{L}$. Since such a
construction is possible in the context of Theorem \ref{mainRich},
it has to be the case that
\begin{equation}\label{form11} \frac{|\mathcal{L}|}{k} \lesssim \frac{|\mathcal{L}|^{2}}{k^{3}} \cdot \epsilon^{-1}. \end{equation}
This is true: since the lines in $\mathcal{L}$ are
$\epsilon$-separated, and $\epsilon \geq \delta$, no point in $Q_{0}$ can be $\delta$-incident
to more than $\min\{|\mathcal{L}|,A\epsilon^{-1}\}$ lines in
$\mathcal{L}$. Thus, we may assume in proving Theorem
\ref{mainRich} that $k \leq \min\{|\mathcal{L}|,A\epsilon^{-1}\}
\lesssim \sqrt{|\mathcal{L}|\epsilon^{-1}}$. This bound is
equivalent to \eqref{form11}.

Third, the bound \eqref{eq:rich} should be compared with the recent work of Guth, Solomon, and Wang \cite[Theorem 1.1]{MR4034922}. Under the hypotheses and terminology of Theorem \ref{mainRich}, the authors in \cite{MR4034922} prove that the number of $\delta$-separated $k$-rich points relative to $\mathcal{L}$ is $\lessapprox |\mathcal{L}|^{2}/k^{3}$ if \emph{a priori} $k \gg \delta \epsilon^{-2}$. Thus, for $0 < \epsilon \ll 1$, the upper bound in \cite{MR4034922} is much stronger than \eqref{eq:rich}, but it is only applicable for sufficiently large values of $k$. To prove Theorem \ref{mainIncidence}, we also need information about small values of $k$. For the case $\epsilon = \delta$ in particular, \cite[Theorem 1.1]{MR4034922} does not seem to contain any information, since if $k \gg \delta \epsilon^{-2} = \delta^{-1}$, the set of $k$-rich points is always empty by Lemma \ref{l:star}.  \end{remark}

In the proof of Theorem \ref{mainRich}, we will employ the
following polynomial cell decomposition lemma of Guth and Katz
\cite[Theorem 4.1]{MR3272924}:
\begin{lemma}\label{l:GK} Let $P \subset \R^{2}$ be a finite set, and let $D \geq 1$ be an integer. Then, there exists a polynomial $p \colon \R^{2} \to \R$ of degree $\deg p \leq D$ such that the following holds. Writing
\begin{displaymath} Z := \{x \in \R^{2} : p(x) = 0\}, \end{displaymath}
the complement $\R^{2} \, \setminus \, Z$ is the union of
$\lesssim D^{2}$ open cells $O$ such that $|O \cap P| \lesssim
|P|/D^{2}$. \end{lemma}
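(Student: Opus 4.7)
The plan is to prove Lemma \ref{l:GK} by iteratively bisecting $P$ via repeated application of the \emph{polynomial ham-sandwich theorem} (Stone--Tukey). The target polynomial will be constructed as a product $p = q_{1}q_{2}\cdots q_{j_{0}}$, where each $q_{j}$ is itself a ham-sandwich polynomial, designed so that its zero set simultaneously bisects each cell in the current partition. After $j_{0} \sim 2\log_{2}D$ iterations, the partition consists of $\lesssim D^{2}$ cells, each holding $\lesssim |P|/D^{2}$ points. The essential input is that, given any $m$ finite Borel measures on $\R^{2}$, there exists a non-zero polynomial of degree $\leq C\sqrt{m}$ whose zero set bisects each of them (each open sign-side carrying at most half the mass of each measure). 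This fact follows from Borsuk--Ulam applied through the Veronese embedding $x \mapsto (x^{\alpha})_{|\alpha| \leq d}$ sending $\R^{2}$ into $\R^{\binom{d+2}{2}}$; the discrete version needed here is obtained by a perturbation or compactness argument.

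The dyadic construction runs as follows. At step $j$, assume inductively that we have a partition of $\R^{2}$ into $\leq 2^{j-1}$ open cells, each containing at most $|P|/2^{j-1}$ points. Apply ham-sandwich to the $2^{j-1}$ counting measures of the cells, obtaining a polynomial $q_{j}$ of degree $\leq C \cdot 2^{(j-1)/2}$ whose zero set bisects each cell simultaneously; the refined partition then has at most $2^{j}$ cells, each with at most $|P|/2^{j}$ points. Stop at $j_{0} := \lceil 2\log_{2}D \rceil$, when $2^{j_{0}} \geq D^{2}$, so every terminal cell contains $\lesssim |P|/D^{2}$ points.

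The final product $p := q_{1} \cdots q_{j_{0}}$ has degree
\[ \deg p \leq \sum_{j=1}^{j_{0}} C \cdot 2^{(j-1)/2} \lesssim 2^{j_{0}/2} \lesssim D, \]
by summing the geometric series, so $\deg p \leq D$ after rescaling absolute constants. Every connected component of $\R^{2} \setminus \{p = 0\}$ lies inside a unique terminal cell and thus contains $\lesssim |P|/D^{2}$ points. Finally, the number of such components is $\lesssim D^{2}$ by the classical Milnor--Thom / Oleinik--Petrovsky bound on the number of sign classes of a real polynomial of degree $D$ in two variables, which supplies the required cell count. The main obstacle in this plan is the polynomial ham-sandwich theorem itself, whose proof is topological: one uses the Veronese embedding to encode ``polynomial hyperplanes'' as linear hyperplanes in a higher-dimensional space, and then applies the Borsuk--Ulam antipodal theorem to a naturally defined odd map on the sphere. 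Secondary technicalities, such as establishing the discrete version via continuous approximation and tracking degrees through the geometric iteration, are routine once the continuous ham-sandwich theorem is in hand.
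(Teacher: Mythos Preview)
The paper does not prove this lemma at all: it is simply quoted as the polynomial cell decomposition of Guth and Katz \cite[Theorem 4.1]{MR3272924}, and used as a black box. Your proposal reproduces precisely the standard Guth--Katz argument (iterated polynomial ham-sandwich via Stone--Tukey/Borsuk--Ulam, with the final component count handled by Milnor--Thom), so there is nothing to compare; your write-up is correct and matches the source the paper cites. One cosmetic point: the lemma asks for $\deg p \leq D$ exactly, while your geometric sum gives $\deg p \lesssim D$; this is repaired by choosing $j_{0}$ slightly smaller (so that the degree sum lands below $D$), the slack being absorbed by the $\lesssim$ in the cell count and cell size.
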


We are then ready to prove Theorem \ref{mainRich}:

\begin{proof}[Proof of Theorem \ref{mainRich}] While proving Theorem \ref{mainRich}, we may assume that
\begin{equation}\label{form5} 2 \leq k \leq \min\{|\mathcal{L}|,A\epsilon^{-1}\}, \end{equation}
where $A$ is the constant from Lemma \ref{l:star}, see Remark
\ref{rem1}. In addition to \eqref{form5}, we may also assume that either (a) $k \geq A_{0}$, or (b) $\epsilon \geq A_{0}\delta$, where $A_{0} \geq 1$ is an absolute constant of our choosing. Indeed, if both $\epsilon \leq A_{0}\delta$ and $k \leq A_{0}$, the right hand side of \eqref{eq:rich} is $\geq A_{0}^{-4} |\mathcal{L}|^{2} \cdot \delta^{-1} \gtrsim |\mathcal{L}| \cdot \delta^{-1}$. But clearly the number of $k$-rich points is no larger than the number of $1$-rich points, which is $\lesssim |\mathcal{L}| \cdot \delta^{-1}$, recalling that $P$ is $\delta$-separated. In both cases (a) and (b) we can infer the following geometric observation, which will be useful later in the argument:
\begin{lemma}\label{l:angular} The following holds if $A_{0} \geq 1$ is large enough, and either (a) $\epsilon \geq \delta$ and $k \geq A_{0}$ or (b) $\epsilon \geq A_{0}\delta$ and $k \geq 2$. If $\mathcal{L}(p) \subset \mathcal{L}$ is an $\epsilon$-separated set of lines which are all $\delta$-incident to a common point $p \in Q_{0}$, with $N := |\mathcal{L}(p)| \geq k$, then there are subsets $\mathcal{L}_{1}(p),\mathcal{L}_{2}(p) \subset \mathcal{L}(p)$ of cardinalities $|\mathcal{L}_{1}(p)| \sim N \sim |\mathcal{L}_{2}(p)|$ such that $\angle(\ell_{1},\ell_{2}) \gtrsim N\epsilon$ for all $(\ell_{1},\ell_{2}) \in \mathcal{L}_{1}(p) \times \mathcal{L}_{2}(p)$. \end{lemma}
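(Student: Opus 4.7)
The plan is to sort $\mathcal{L}(p)$ by slope and take $\mathcal{L}_{1}(p)$ and $\mathcal{L}_{2}(p)$ to be the "bottom third" and "top third" of this ordering; the whole task then reduces to bounding from below the $a$-spread of $N$ pairwise $\epsilon$-separated parameters lying in a thin tube.

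Write $p = (x_{0},y_{0}) \in Q_{0}$. The calculation in the proof of Lemma \ref{l:star} already shows that every $\ell_{(a,b)} \in \mathcal{L}(p)$ has its parameters $(a,b)$ within distance $C\delta$ of the auxiliary line $L_{p} := \{(a,b) : b = -x_{0}a + y_{0}\}$, for an absolute constant $C \geq 1$, and $L_{p}$ has slope $-x_{0} \in [-1,1]$. List the parameters of $\mathcal{L}(p)$ as $(a_{1},b_{1}),\ldots,(a_{N},b_{N})$ with $a_{1} \leq \cdots \leq a_{N}$, set $n_{0} := \max(1,\lfloor N/3 \rfloor)$, and take
\begin{displaymath}
\mathcal{L}_{1}(p) := \{\ell_{(a_{i},b_{i})} : i \leq n_{0}\}, \qquad \mathcal{L}_{2}(p) := \{\ell_{(a_{i},b_{i})} : i > N - n_{0}\},
\end{displaymath}
each of cardinality $n_{0}$. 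Using $N \geq k \geq 2$ in case (b) and $N \geq k \geq A_{0}$ in case (a), one has $n_{0} \sim N$ as soon as $A_{0}$ is at least a fixed absolute constant. The entire lemma then reduces to proving $a_{N - n_{0} + 1} - a_{n_{0}} \gtrsim N\epsilon$, since slopes lie in $[-1,1]$ and $\arctan$ is bi-Lipschitz there, whence $\angle(\ell_{i},\ell_{j}) \sim |a_{i}-a_{j}|$ for any $(\ell_{i},\ell_{j}) \in \mathcal{L}_{1}(p) \times \mathcal{L}_{2}(p)$.

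In case (b), where $\epsilon \geq A_{0}\delta$, the tube inclusion gives $|(b_{i} - b_{j}) + x_{0}(a_{i} - a_{j})| \leq 2C\delta$, which together with $|x_{0}| \leq 1$ and $|(a_{i} - a_{j},b_{i} - b_{j})| \geq \epsilon$ yields, by a short algebraic manipulation, $|a_{i} - a_{j}| \gtrsim \epsilon$ for all $i \neq j$, provided $A_{0}$ is taken sufficiently large. Summing these gaps across the $\geq N/3$ consecutive-index steps between $n_{0}$ and $N - n_{0} + 1$ produces the bound. In case (a), where only $\epsilon \geq \delta$ is available, the preceding algebra collapses because the tube is not thin at scale $\epsilon$. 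The plan there is to replace it by a packing argument: any $a$-slab of width $\epsilon$ intersected with the $C\delta$-tube about $L_{p}$ is contained in a rectangle of dimensions $\epsilon \times 2C\delta$, hence of diameter $\lesssim \epsilon$ (using $\delta \leq \epsilon$), so it contains $\lesssim 1$ pairwise $\epsilon$-separated points. Partitioning the $a$-range $[a_{n_{0}},a_{N - n_{0} + 1}]$ into $\epsilon$-slabs therefore forces this range to be $\gtrsim N\epsilon$, provided $A_{0}$ is chosen large enough that the middle block contains more than an absolute-constant number of points.

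The main obstacle is case (a). When $\delta$ is comparable to $\epsilon$, the tube is effectively two-dimensional at scale $\epsilon$, so that $\epsilon$-separation in $(a,b)$-space no longer cleanly implies $\epsilon$-separation of the $a$-coordinates; one must replace this clean algebra with the slab-packing count, and that count only produces the target lower bound $N\epsilon$ after enough points have been gathered. This is exactly why the hypothesis $k \geq A_{0}$ appears in case (a), while in case (b) the cleaner separation of slopes allows $k \geq 2$ to suffice.
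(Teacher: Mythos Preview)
Your proof is correct. The paper itself omits the proof entirely (``We omit the easy proof''), so there is no approach to compare against; your argument supplies exactly the kind of elementary details one would expect here. The reduction to bounding the $a$-spread via the tube inclusion from Lemma~\ref{l:star} is the natural move, and your two cases are handled cleanly: in case~(b) the thinness of the tube at scale $\epsilon$ indeed forces pairwise $a$-separation $\gtrsim \epsilon$, while in case~(a) the slab-packing count correctly replaces this when the tube is thick, at the cost of requiring $N \geq A_{0}$ so that the middle block outnumbers the absolute packing constant. One very minor remark: in case~(b) with $N=2$ you have $n_{0}=1$ and the ``summing consecutive gaps'' step degenerates to a single gap $a_{2}-a_{1} \gtrsim \epsilon \sim N\epsilon$, which is fine but worth noting explicitly.
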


We omit the easy proof. We will assume that either (a) or (b) holds, so we have the conclusion of Lemma \ref{l:angular}. To prove Theorem \ref{mainRich}, we fix $k$ as in \eqref{form5} (and also with $k \geq A_{0}$ in case (a) holds),
and make a counter assumption: there exists an $\epsilon$-separated
line family $\mathcal{L}$, and a $\delta$-separated set $P \subset
Q_{0}$ of cardinality
\begin{equation}\label{form6} |P| \geq C\frac{|\mathcal{L}|^{2}}{k^{3}} \cdot \epsilon^{-1} \end{equation}
such that every point in $P$ is $\geq k$-rich (relative to
$\mathcal{L}$). Here $C \geq 1$ is some large absolute constant to be
determined later. We apply the cell decomposition lemma, Lemma
\ref{l:GK}, with $D := \floor{C_{\deg}|\mathcal{L}|/k} \geq 1$, where $C_{\deg} \geq 1$ is another constant satisfying
\begin{displaymath} 1 \ll C_{\deg} \ll \sqrt{C}. \end{displaymath}
The precise requirements will become clear during the proof. We obtain
a polynomial $p \colon \R^{2} \to \R$ of degree $\deg p \leq
C_{\deg}|\mathcal{L}|/k$, and a collection of "cells" $\mathcal{O}$, with
$|\mathcal{O}| \lesssim |\mathcal{L}|^{2}/k^{2}$, such that
\begin{equation}\label{form12} |O \cap P| \lesssim \frac{|P|}{D^{2}} \sim \frac{k^{2}|P|}{C_{\deg}^{2}|\mathcal{L}|^{2}}, \qquad O \in \mathcal{O}. \end{equation}
We split the set $P$ into two parts: the points "well inside" the
cells $O \in \mathcal{O}$, and the part "close" to $Z = \{p =
0\}$. The plan is to show that both parts have cardinality $<
|P|/2$, which gives a contradiction, and completes the proof.
Precisely, we write
\begin{displaymath} O' := O \, \setminus \, Z(\delta), \qquad O \in \mathcal{O}. \end{displaymath}
We then write $P = P_{1} \cup P_{2}$, where
\begin{displaymath} P_{1} := P \cap \bigcup_{O \in \mathcal{O}} O' \quad \text{and} \quad P_{2} := P \cap Z(\delta). \end{displaymath}
\subsubsection{Proof that $|P_{1}| < |P|/2$} We start with the following observation: if a line $\ell \subset \R^{2}$ (from $\mathcal{L}$ if desired) is $\delta$-incident to a point in $p \in O'$, then $\ell \cap O \neq \emptyset$. It follows: \emph{every line $\ell \subset \R^{2}$ is $\delta$-incident to a point in at most $\deg p + 1$ sets $O'$.} Indeed, if $\ell$ violated this, then it would intersect $> \deg p + 1$ distinct cells $O \in \mathcal{O}$, and hence cross $Z$ in $\geq \deg p + 1$ distinct points. By B\'ezout's theorem, this would force $\ell \subset Z$, and hence $\ell(\delta) \subset Z(\delta)$. In particular, $\ell$ could not be $\delta$-incident to any points in any of the cells $O' \subset \R^{2} \, \setminus \, Z(\delta)$. We learned this argument from \cite[Lemma 3.2]{MR3454378}.

We infer the following useful corollary of the previous
observation.  For $O \in \mathcal{O}$ fixed, we write
$\mathcal{L}_{O}$ for the subset of $\mathcal{L}$ which are
$\delta$-incident to at least one point in $O'$. Then,
\begin{equation}\label{form13} \sum_{O \in \mathcal{O}} |\mathcal{L}_{O}| = \sum_{\ell \in \mathcal{L}} |\{O \in \mathcal{O} : \ell \in \mathcal{L}_{O}\}| \leq [\deg p + 1] \cdot |\mathcal{L}|. \end{equation}
We are now ready to estimate the number of points in $P_{1}$. We
will first use the $k$-richness of the points in $P_{1} \subset
P$, and then the trivial bound $|\mathcal{I}(P',\mathcal{L}')|
\leq |P'||\mathcal{L}'|$:
\begin{align*} k|P_{1}| \leq |\mathcal{I}(P_{1},\mathcal{L})| & = \sum_{O \in \mathcal{O}} |\mathcal{I}(P \cap O',\mathcal{L}_{O})|\\
& \stackrel{\eqref{form12}}{\lesssim} \frac{k^{2}|P|}{C_{\deg}^{2}|\mathcal{L}|^{2}} \sum_{O \in \mathcal{O}} |\mathcal{L}_{O}|\\
& \stackrel{\eqref{form13}}{\leq}
\frac{k^{2}|P|}{C_{\deg}^{2}|\mathcal{L}|^{2}} \cdot [\deg p + 1] \cdot
|\mathcal{L}| \lesssim \frac{k|P|}{C_{\deg}}, \end{align*} recalling that $\deg p
\leq C_{\deg}|\mathcal{L}|/k$. If $C_{\deg} \geq 1$ was chosen large enough, this
shows that $|P_{1}| < |P|/2$, as desired.

\subsubsection{Proof that $|P_{2}| < |P|/2$} The argument here follows rather closely \cite[\S 5.2]{MR3498792}. There are certain troublesome points $P_{2,\mathrm{bad}} \subset P_{2}$ whose cardinality we bound first: they are the points $p \in P_{2}$ such that $B(p,2\delta)$ contains a component of $Z$. By Harnack's curve theorem \cite{MR1509883}, the number of components $Z_{1},\ldots,Z_{N}$ of $Z$ is bounded by
\begin{displaymath} N \lesssim [\deg p]^{2} \lesssim \frac{C_{\deg}^{2}|\mathcal{L}|^{2}}{k^{2}} \leq A\frac{C_{\deg}^{2}|\mathcal{L}|^{2}}{k^{3}} \cdot \epsilon^{-1} \leq A\frac{C_{\deg}^{2}|P|}{C}, \end{displaymath}
recalling from \eqref{form5} that $k \leq A\epsilon^{-1}$, and then
applying the counter assumption \eqref{form6}. Since the points in
$P$ are $\delta$-separated, and the $2\delta$-neighbourhood of
every point in $P_{2,\mathrm{bad}}$ contains one of the $N$
components of $Z$, we conclude that $|P_{2,\mathrm{bad}}| \lesssim
N \lesssim C_{\deg}^{2}|P|/C$. Choosing $C \geq 1$ large enough, and then $C_{\deg} \ll \sqrt{C}$, we find
that $|P_{2,\mathrm{bad}}| < |P|/4$. To conclude the proof, it
remains to prove that $|P_{2,\mathrm{good}}| < |P|/4$, where
$P_{2,\mathrm{good}} := P_{2} \, \setminus \, P_{2,\mathrm{bad}}$.
\begin{figure}[h!]
\begin{center}
\includegraphics[scale = 0.9]{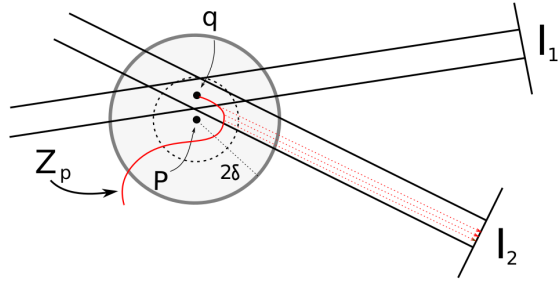}
\caption{$Z \cap B(p,2\delta)$ must have a large projection in one
of two directions with a positive angle.}\label{fig1}
\end{center}
\end{figure}

We make a geometric observation, depicted in Figure \ref{fig1}.
Fix $p \in P_{2,\mathrm{good}}$. Since $p \in Z(\delta)$, and
$B(p,2\delta)$ contains no component of $Z$, we infer that some
component $Z_{p}$ of $Z$ contains a point $q \in B(p,\delta)$, and
also intersects $\R^{2} \, \setminus \, B(p,2\delta)$. We spend a
moment studying the orthogonal projections of $Z_{p} \cap
B(p,2\delta)$ to lines through the origin. Fix two such lines
$L_{1},L_{2}$ with angle $\angle(L_{1},L_{2}) =: \alpha \geq
\delta$, and let $\pi_{j} \colon \R^{2} \to L_{j}$ be the
orthogonal projection. Evidently $\pi_{j}(q) \in \pi_{j}(Z_{p}
\cap B(p,2\delta))$. Let $\Pi_{j}$ be the (possibly degenerate) component interval of
$\pi_{j}(Z_{p} \cap B(p,2\delta))$ containing $\pi_{j}(q)$. We
claim that
\begin{equation}\label{form17} \max\{|\Pi_{1}|,|\Pi_{2}|\} \geq c\alpha\delta, \end{equation}
where $c > 0$ is a suitable absolute constant. Assume to the
contrary that $\max\{|\Pi_{1}|,|\Pi_{2}|\} < c\alpha \delta$. This
implies that there are points $x_{j}^{1},x_{j}^{2} \in L_{1}$ at
distance $< c\alpha \delta$ from $\pi_{j}(q)$ which are not in
$\pi_{j}(Z_{p} \cap B(p,2\delta))$. Write $I_{j} :=
[x_{j}^{1},x_{j}^{2}] \subset L_{j}$, $j \in \{1,2\}$, and note
that
\begin{displaymath} Q := \pi_{1}^{-1}(I_{1}) \cap \pi_{2}^{-1}(I_{2}) \end{displaymath}
is a rectangular box of diameter $\lesssim (c\alpha \delta)/\alpha
= c\delta$. Since $q \in Q \cap B(p,\delta)$, choosing $c > 0$
sufficiently small allows us to conclude that $Q \subset
B(p,2\delta)$. Since $x_{j}^{i} \notin \pi_{j}(Z_{p} \cap
B(p,2\delta))$ for $i,j\in \{1,2\}$, we have
\begin{displaymath} [Z_{p} \cap B(p,2\delta)] \cap \pi_{j}^{-1}(x_{j}^{i}) = \emptyset. \end{displaymath}
The boundary of $Q$ is contained in the union of the (four) lines
$\pi_{j}^{-1}(x_{j}^{i})$, $i,j \in \{1,2\}$, so we infer that
\begin{equation}\label{form16} [Z_{p} \cap B(p,2\delta)] \cap \partial Q = \emptyset. \end{equation}
However, $Z_{p}$ is a connected set meeting both $Q$ (at $q$) and
$\R^{2} \, \setminus \, Q$ (recalling that $Q \subset
B(p,2\delta)$), so $Z_{p} \cap \partial Q \neq \emptyset$. Using again
that $Q \subset B(p,2\delta)$, this violates \eqref{form16}, and
proves \eqref{form17}. Now that \eqref{form17} has been proven, we
can relax it a bit by eliminating the reference to the special
component $Z_{p}$: we have shown that if $p \in
P_{2,\mathrm{good}}$, and $L_{1},L_{2}$ are two lines through the
origin with $\angle(L_{1},L_{2}) = \alpha \geq \delta$, then
\begin{equation}\label{form15} \max\{|\pi_{L_{1}}(Z \cap B(p,2\delta))|,|\pi_{L_{2}}(Z \cap B(p,2\delta))|\} \geq c\alpha \delta. \end{equation}
We apply this as follows: let $\ell_{1},\ell_{2} \in \mathcal{L}$
be two lines $\delta$-incident to $p$ with
$\angle(\ell_{1},\ell_{2}) =: \alpha \geq \delta$, and let
$\pi_{1},\pi_{2}$ be the orthogonal projections to $L_{1} :=
\ell_{1}^{\perp}$ and $L_{2} := \ell_{2}^{\perp}$, respectively.
It follows from \eqref{form15}, and $B(p,2\delta) \subset
\ell_{i}(4\delta)$, that
\begin{equation}\label{form7} \max\{|\pi_{1}([Z \cap B(p,2\delta)] \cap \ell_{1}(4\delta))|, |\pi_{2}([Z \cap B(p,2\delta)] \cap \ell_{2}(4\delta))|\} \geq c\alpha\delta. \end{equation}
To exploit this information, recall that $p \in
P_{2,\mathrm{good}} \subset P$ is $\geq k$-rich, with $k \geq 2$, and the lines in $\mathcal{L}$ are $\epsilon$-separated, with $\epsilon \geq \delta$. So, using Lemma \ref{l:angular}, we may
isolate two collections $\mathcal{L}_{1}(p)$ and
$\mathcal{L}_{2}(p)$ of $\gtrsim k$ lines in $\mathcal{L}$, all
$\delta$-incident to $p$, such that $\angle(\ell_{1},\ell_{2})
\gtrsim k\epsilon$ for all pairs $(\ell_{1},\ell_{2}) \in
\mathcal{L}_{1}(p) \times \mathcal{L}_{2}(p)$. By \eqref{form7},
the following holds for either $\mathcal{L}_{1}(p)$ or
$\mathcal{L}_{2}(p)$:
\begin{equation}\label{form8} |\pi_{\ell^{\perp}}([Z \cap B(p,2\delta) ] \cap \ell(4\delta))| \geq ck\epsilon\delta, \qquad \ell \in \mathcal{L}_{j}(p), \end{equation}
where $c > 0$ might be a bit smaller than in \eqref{form7}. Motivated by this observation, we say that $(p,\ell) \in
P_{2,\mathrm{good}} \times \mathcal{L}$ is a \emph{good incidence}
if $\ell \sim p$, and \eqref{form8} holds. Since
$|\mathcal{L}_{j}(p)| \gtrsim k$ for all $p \in
P_{2,\mathrm{good}}$ and $j\in \{1,2\}$, we see that
\begin{align*} \sum_{\ell \in \mathcal{L}} & |\{p \in P_{2,\mathrm{good}} : (p,\ell) \text{ is a good incidence}\}|\\
& = \sum_{p \in P_{2,\mathrm{good}}} |\{\ell \in \mathcal{L} :
(p,\ell) \text{ is a good incidence}\}| \gtrsim |P_{2,\mathrm{good}}|
\cdot k. \end{align*} Averaging over $\ell \in \mathcal{L}$, we
find a line $\ell_{0} \in \mathcal{L}$ such that
\begin{equation}\label{form9} |\{p \in P_{2,\mathrm{good}} : (p,\ell_0) \text{ is a good incidence}\}| \gtrsim \frac{|P_{2,\mathrm{good}}| \cdot k}{|\mathcal{L}|}. \end{equation}
We will now conclude the proof by inferring, from \eqref{form9},
that (Lebesgue) positively many lines inside $\ell_{0}(4\delta)$
are contained in $Z$. Since however $Z$ has null measure (and
\emph{a fortiori} can contain at most $\deg p$ distinct parallel
lines), we will have reached a contradiction. So, consider a
random line $\ell' \subset \ell_{0}(4\delta)$; more precisely, let
$\tn$ be the uniform distribution on
$\pi_{\ell_{0}^{\perp}}(\ell_{0}(4\delta)) \cong [0,8\delta]$, and
pick
\begin{displaymath} \ell' = \pi_{\ell_{0}^{\perp}}^{-1}\{t\} \subset \ell(4\delta) \end{displaymath}
according to $t \sim \tn$. Whenever $(p,\ell_0)$ is a good
incidence, \eqref{form8} implies that the probability of $\ell'$
hitting $Z \cap \bar{B}(p,2\delta)$ is $\gtrsim k\epsilon$. The
balls $\bar{B}(p,2\delta)$ have bounded overlap as $p$ varies, so
\eqref{form9} implies that the expected number "$\mathbb{E}$" of
intersections between $\ell' \subset \ell_0(4\delta)$ and $Z$ is
\begin{displaymath} \mathbb{E} \gtrsim \frac{|P_{2,\mathrm{good}}| \cdot k^{2}\epsilon}{|\mathcal{L}|}. \end{displaymath}
On the other hand, $\mathbb{E} \leq \deg p \leq
C_{\deg}|\mathcal{L}|/k$: any line with $> \deg p$ intersections with $Z$
is contained in $Z$ by B\'ezout's theorem, and this cannot happen
for a set of lines with positive probability (or even strictly
more than $\deg p$ choices of $\ell'$). So, we infer that
\begin{displaymath} \frac{|P_{2,\mathrm{good}}| \cdot k^{2}\epsilon}{|\mathcal{L}|} \lesssim \mathbb{E} \leq \frac{C_{\deg}|\mathcal{L}|}{k}, \end{displaymath}
which can be rearranged to
\begin{displaymath} |P_{2,\mathrm{good}}| \lesssim \frac{C_{\deg}|\mathcal{L}|^{2}}{k^{3}} \cdot \epsilon^{-1} \leq \frac{C_{\deg}|P|}{C} \leq \frac{|P|}{\sqrt{C}}, \end{displaymath}
using the counter assumption \eqref{form6} in the end, and also recalling that $C_{\deg} \leq \sqrt{C}$. Choosing $C
\geq 1$ large enough, we infer that $|P_{2,\mathrm{good}}| < |P|/4$,
as desired. Since we have now shown that
\begin{displaymath} |P| \leq |P_{1}| + |P_{2,\mathrm{bad}}| + |P_{2,\mathrm{good}}| < \frac{|P|}{2} + \frac{|P|}{4} + \frac{|P|}{4} < |P|, \end{displaymath}
a contradiction (starting from \eqref{form6}) has been reached,
and the proof of Theorem \ref{mainRich} is complete. \end{proof}
We then quickly derive Theorem \ref{mainIncidence}:
\begin{proof}[Proof of Theorem \ref{mainIncidence}] Fix a $\delta$-separated set $P \subset Q_{0}$, and a $\delta$-separated set of lines $\mathcal{L} \subset \mathcal{Q}_{0}$. There is no loss of generality assuming that $|\mathcal{L}| \geq |P|$: if this fails to begin with, one may apply \emph{point-line duality}
to exchange the roles of $P$ and $\mathcal{L}$ and obtain a new
set of points, $P_{\mathcal{L}}$, and a new family of lines,
$\mathcal{L}_{P}$. This is a standard trick, so we only sketch the
details: one associates to every $(a,b) \in P$ the line
$\tilde{\ell}_{(a,b)} = \{y = -ax + b\}$, and to every line
$\ell_{(c,d)} = \{y = cx + d\} \in \mathcal{L}$ the point $(c,d)
\in \R^{2}$. Then, it is clear that $(a,b)$ lies on $\ell_{(c,d)}$
if and only if $(c,d)$ lies on $\tilde{\ell}_{(a,b)}$. Also, the
$\delta$-separated set $P \subset Q_{0}$ gets mapped to a
$\delta$-separated set of lines in $\mathcal{Q}_{0}$, and vice
versa, by our definition of "$\delta$-separation". With a little
work, one can also check that if $(a,b)$ is $\delta$-incident to
$\ell_{(c,d)}$, then $(c,d)$ is $C\delta$-incident to
$\tilde{\ell}_{(a,b)}$. Therefore, with suitable choices of
constants in the definitions, one has
$|\mathcal{I}(P,\mathcal{L})| \lesssim
|\mathcal{I}(P_{\mathcal{L}},\mathcal{L}_{P})|$. But if
$|\mathcal{L}| < |P|$, then $|P_{\mathcal{L}}| <
|\mathcal{L}_{P}|$, and we have arrived at a situation where the
number of lines exceeds the number of points, as desired.

So, we assume that $|\mathcal{L}| \geq |P|$, and in particular
$[|\mathcal{L}|^{2/3}/|P|^{1/3}] \cdot \delta^{-1/3} \geq 1$. For
$j \geq 1$, let
\begin{displaymath} P_{j} := \{p \in P : p \text{ is $k$-rich for some $2^{j - 1} \leq k < 2^{j}$}\}. \end{displaymath}
The set $P_{1}$ consists of the $1$-rich points in $P$, and for these we apply the trivial bound $|\mathcal{I}(P_{1},\mathcal{L})| \leq |P|$. For $j \geq 2$, we apply Theorem \ref{mainRich} as follows:
\begin{align*} |\mathcal{I}(P,\mathcal{L})| & \lesssim |P| + \sum_{j \geq 2} 2^{j} |P_{j}| \lesssim  \sum_{2^{j} \leq [|\mathcal{L}|^{2/3}/|P|^{1/3}] \cdot \delta^{-1/3}} 2^{j}|P|\\
& + \sum_{2^{j} > [|\mathcal{L}|^{2/3}/|P|^{1/3}] \cdot
\delta^{-1/3}} 2^{j} \cdot \frac{|\mathcal{L}|^{2}}{2^{3j}} \cdot
\delta^{-1}.\end{align*} One readily verifies that both sums
above are comparable to $|P|^{2/3}|\mathcal{L}|^{2/3} \cdot
\delta^{-1/3}$, and also $|P| \leq |P|^{2/3}|\mathcal{L}|^{2/3} \cdot
\delta^{-1/3}$ since we assumed $|P| \leq |\mathcal{L}$|. This concludes the proof. \end{proof}

%%%%%%%%%%%%%%%%%%%%%%%%%%%%%%%%%
\section{Loomis-Whitney inequality in the Heisenberg
group}\label{s:LW} In this section, we deduce the Loomis-Whitney
inequality in Theorem \ref{mainIntro2} from the planar incidence
bound established in the previous section. We begin by introducing
the Heisenberg concepts and notation carefully. The \emph{first
Heisenberg group} $\mathbb{H}$ is the group $(\mathbb{R}^3,\cdot)$
with the group product
\begin{equation}\label{eq:GroupProd} (x,y,t) \cdot (x',y',t') := (x + x', y + y', t + t' + \tfrac{1}{2}(xy' - yx')). \end{equation}
The \emph{Heisenberg dilation} $\delta_{\lambda}$ with constant $\lambda>0$ is the group isomorphism
\begin{displaymath}
\delta_{\lambda}:\mathbb{H}\to\mathbb{H},\quad \delta_{\lambda}(x,y,t)=(\lambda x,\lambda y,\lambda^2 t).
\end{displaymath}
In geometric measure theory of the sub-Riemannian Heisenberg group \cite{MR3587666}, an important role is played by
\emph{Heisenberg projections} that are adapted to the group and dilation structure of $\mathbb{H}$ and that map onto homogeneous subgroups of $\mathbb{H}$. In the present paper, we only consider two projections associated to two "coordinate" planes introduced below.

Let $\W_{x} := \{(x,0,t) : (x,t) \in \R^{2}\} \subset \He$ and
$\W_{y} = \{(0,y,t) : (y,t) \in \R^{2}\} \subset \He$ be the
\emph{vertical subgroups} of $\He$ containing the $x$-axis and $y$-axis,
respectively. Write also $\mathbb{L}_{x} := \{(x,0,0) : x \in
\R\}$ and $\mathbb{L}_{y} := \{(0,y,0) : y \in \R\}$, so
\begin{itemize}
\item $\mathbb{L}_{x}$ is a complementary \emph{horizontal subgroup} of
$\mathbb{W}_{y}$, and \item $\mathbb{L}_{y}$ is a complementary
\emph{horizontal subgroup} of $\W_{x}$.
\end{itemize}
This means, for example, that every point $p \in \He$ has a unique
decomposition $p = w_{x} \cdot l_{y}$, where $w_{x} \in \W_{x}$
and $l_{y} \in \mathbb{L}_{y}$. Similarly, there is also a unique
decomposition $p = w_{y} \cdot l_{x}$, where $w_{y} \in
\mathbb{W}_{y}$ and $l_{x} \in \mathbb{L}_{x}$. These
decompositions give rise to the \emph{vertical projections}
\begin{displaymath} p \mapsto w_{x} =: \pi_{x}(p) \in \W_{x} \quad \text{and} \quad p \mapsto w_{y} =: \pi_{y}(p) \in \W_{y}. \end{displaymath}
It is immediate from the definition that the fibres of the
projections $\pi_{x}$ and $\pi_{y}$ left cosets of $\mathbb{L}_{y}$ and $\mathbb{L}_{x}$, respectively:
\begin{displaymath} \pi_{x}^{-1}\{w\} = w \cdot \mathbb{L}_{y} \quad \text{and} \quad \pi_{x}^{-1}\{w\} = w
\cdot \mathbb{L}_{x}. \end{displaymath}
Using the group product in \eqref{eq:GroupProd}, it is
also easy to write down explicit expressions for $\pi_{x}$ and
$\pi_{y}$:
\begin{displaymath} \pi_{y}(x,y,t) = (0,y,t + \tfrac{xy}{2}) \quad \text{and} \quad \pi_{x}(x,y,t) = (x,0,t - \tfrac{xy}{2}). \end{displaymath}
If the reader is not comfortable with the Heisenberg group, he can simply identify both $\W_{x}$ and $\W_{y}$ with $\R^{2}$, and consider the maps $(x,y,t) \mapsto (y,t + (xy)/2))$ and $(x,y,t) \mapsto (x,t - (xy)/2)$ without paying attention to their origin. It is clear that $\pi_{x}$ and $\pi_{y}$ are smooth, and hence
locally Lipschitz with respect to the Euclidean metric in
$\R^{3}$. The vertical projections are, in fact, \textbf{not}
Lipschitz with respect to the \emph{Kor\'anyi distance} $d(p,q) =
\|q^{-1} \cdot p\|$, but all the metric concepts which we use in this section (balls, neighborhoods etc.) will be defined using the Euclidean distance.

We recall the statement of Theorem \ref{mainIntro2}:

\begin{thm}\label{main} Let $K \subset \He$ be Lebesgue measurable. Then,
\begin{equation}\label{form1} |K| \lesssim |\pi_{x}(K)|^{2/3} \cdot |\pi_{y}(K)|^{2/3}. \end{equation}
\end{thm}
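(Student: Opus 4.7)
The plan is to discretize $K$ at a small scale $\delta > 0$ and reinterpret the resulting count of $\delta$-cubes as a count of $\delta$-incidences between points and lines in $\W_y$, so that Theorem~\ref{mainIntro} can be invoked. By inner regularity of Lebesgue measure it suffices to prove the bound for compact $K \subset [-R,R]^3$. Fix $0 < \delta \ll 1$. Let $\mathcal{C}$ be the collection of $\delta$-cubes of the standard grid in $\R^3$ meeting $K$, and let $P_x \subset \W_x$ (respectively $P_y \subset \W_y$) consist of the centres of the $\delta$-squares of the standard grid of $\W_x$ (respectively $\W_y$) that meet $\pi_x(K)$ (respectively $\pi_y(K)$). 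Then $|K| \leq |\mathcal{C}|\delta^3$, the sets $P_x$ and $P_y$ are $\delta$-separated, and
\begin{displaymath} |P_x|\delta^2 \leq |\pi_x(K)(C\delta)|, \qquad |P_y|\delta^2 \leq |\pi_y(K)(C\delta)|. \end{displaymath}

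The key structural input is a point-line duality between $\W_x$ and $\W_y$. A short computation with the group law shows that for every $p_x = (x_0,0,t_0) \in \W_x$, the image of the fibre $\pi_x^{-1}\{p_x\}$ under $\pi_y$ is the line
\begin{displaymath} \ell_{p_x} := \{(0,y,t_0 + x_0 y) : y \in \R\} \subset \W_y, \end{displaymath}
and $\pi_y(p) \in \ell_{\pi_x(p)}$ for every $p \in \He$. Identifying $\W_y$ with $\R^2$ via $(0,y,\tau) \leftrightarrow (y,\tau)$, the correspondence $(x_0,t_0) \leftrightarrow \ell_{(x_0,t_0)}$ is an isometry for the line metric of Section~2. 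After an isotropic rescaling of coordinates depending only on $R$, we may therefore assume $P_y \subset Q_0$ and that $\mathcal{L} := \{\ell_p : p \in P_x\}$ is a $\delta$-separated family of lines in $\mathcal{Q}_0$ with $|\mathcal{L}| = |P_x|$.

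For each $C \in \mathcal{C}$ pick $p_C \in C \cap K$, and let $\hat{p}_x(C) \in P_x$, $\hat{p}_y(C) \in P_y$ be the centres of the grid squares containing $\pi_x(p_C)$ and $\pi_y(p_C)$. Since $\ell_{\pi_x(p_C)}$ passes through $\pi_y(p_C)$ and the correspondence $p_x \mapsto \ell_{p_x}$ is Lipschitz on our bounded domain, the line $\ell_{\hat{p}_x(C)}$ is $C'\delta$-incident to $\hat{p}_y(C)$. Conversely, the joint map $\Phi := (\pi_x,\pi_y)$ is a smooth embedding (one recovers $x$, $y$, and $t$ from the two projections), hence bi-Lipschitz on $[-R,R]^3$, which forces the assignment $C \mapsto (\hat{p}_x(C),\hat{p}_y(C))$ to be at most $O(1)$-to-one. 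Consequently $|\mathcal{C}| \lesssim |\mathcal{I}_{C'\delta}(P_y,\mathcal{L})|$, and Theorem~\ref{mainIntro} (with its constant adjusted to absorb $C'$, cf.~Remark~\ref{rem1}) yields $|\mathcal{C}| \lesssim |P_x|^{2/3}|P_y|^{2/3}\delta^{-1/3}$. Therefore
\begin{displaymath} |K| \leq |\mathcal{C}|\delta^3 \lesssim (|P_x|\delta^2)^{2/3}(|P_y|\delta^2)^{2/3} \leq |\pi_x(K)(C\delta)|^{2/3}|\pi_y(K)(C\delta)|^{2/3}. \end{displaymath}
Letting $\delta \to 0$ and using that $\pi_x(K)$ and $\pi_y(K)$ are compact (so their thickenings decrease in measure to them) gives the claim. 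The main technical step is verifying the bi-Lipschitz character of $\Phi$ together with bookkeeping of the rescaling required to place $P_y$ inside $Q_0$ and $\mathcal{L}$ inside $\mathcal{Q}_0$; once these are settled, the volume inequality is a direct translation of Theorem~\ref{mainIntro}.
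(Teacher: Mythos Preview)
Your argument follows essentially the paper's own route: discretize at scale $\delta$, use the point--line duality $\W_x \ni (x_0,t_0) \leftrightarrow \{(y,t_0+x_0 y)\} \subset \W_y$ to convert the volume count into an incidence count, apply Theorem~\ref{mainIntro}, and let $\delta \to 0$. The paper discretizes the two projections and bounds each tube intersection by $\delta^3$ via Corollary~\ref{cor1}; you instead discretize $K$ into cubes and invoke the bi-Lipschitz character of $(\pi_x,\pi_y)$ on bounded sets --- these are equivalent bookkeeping devices.

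One point needs correction: the normalization cannot be done by an ``isotropic rescaling''. An isotropic scaling $(y,\tau)\mapsto(\lambda y,\lambda\tau)$ of $\W_y$ leaves the slope $x_0$ of $\ell_{(x_0,t_0)}$ unchanged, so for $R>1$ you cannot force $\mathcal{L}\subset\mathcal{Q}_0$ this way, and an isotropic scaling of $\R^3$ does not commute with $\pi_x,\pi_y$. The correct move (used in the paper) is the Heisenberg dilation $\delta_\lambda(x,y,t)=(\lambda x,\lambda y,\lambda^2 t)$: since $|\delta_\lambda K|=\lambda^4|K|$ and $|\pi_x(\delta_\lambda K)|=\lambda^3|\pi_x(K)|$ (likewise for $\pi_y$), both sides of \eqref{form1} scale by $\lambda^4$, so one may assume $K\subset\tfrac{1}{2}Q_0$ at the outset with an absolute implicit constant. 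With this adjustment your proof is complete.
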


\begin{remark} On the left hand side of \eqref{form1}, the notation "$|\cdot|$" refers to either Lebesgue measure on $\R^{3}$ or $\mathcal{H}^{4}_{d}$ (which are the same, up to a multiplicative constant). Similarly, on the right hand side of \eqref{form1}, the notation "$|\cdot|$" can either refer to Lebesgue measure on $\R^{2}$, or $\calH^{3}_{d}$ restricted to a vertical subgroup; these measures, again, coincide up to a constant. Below, the notation "$|\cdot|$" may also refer to cardinality, but the meaning should always be clear from the context.
\end{remark}

\subsection{Reduction to a planar incidence problem}
We start with a few geometric observations which will be used in the proof of Theorem \ref{main}.  Fix a ``scale'' parameter $0<\delta < 1$. We write $Q_{0} := [-1,1]^{3} \subset \R^{3}$. The first lemma records that the "tubes"
$\pi_{x}^{-1}(B(p_x,\delta))$ and $\pi_{y}^{-1}(B(p_y,\delta))$
are fairly close to Euclidean $\delta$-tubes inside the bounded
set $Q_0$:
\begin{lemma}\label{l:tubes} There is an absolute constant $A_{1} \geq 1$ such that the following holds. Let $w_x\in \W_x$, $w_y\in \W_y$, and write $B_{x} := B(w_{x},\delta) \cap \W_{x}$ and $B_{y} := B(w_{y},\delta) \cap \W_{y}$. Then
\begin{displaymath} \pi_{x}^{-1}(B_{x}) \cap Q_{0} \subset [w_{x} \cdot \mathbb{L}_{y}](A_{1}\delta) \quad \text{and} \quad \pi_{y}^{-1}(B_{y}) \cap Q_{0} \subset [w_{y} \cdot \mathbb{L}_{x}](A_{1}\delta). \end{displaymath}
\end{lemma}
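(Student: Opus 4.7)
\medskip

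My plan is to verify the lemma by a direct computation using the explicit coordinate formulas for $\pi_x$ and $\pi_y$, together with the parametrization of the fibres $w \cdot \mathbb{L}_y$ and $w \cdot \mathbb{L}_x$ that was derived just before the lemma. The role of the set $Q_0 = [-1,1]^3$ is crucial: it is what keeps certain cross terms bounded, and hence forces the preimages of $\delta$-balls in $\W_x, \W_y$ to lie in Euclidean $O(\delta)$-neighborhoods of the fibres. Without the cutoff to $Q_0$, the preimages would be genuinely curved (``tilted'') tubes, whose deviation from the straight fibre grows with $|y|$ (resp.\ $|x|$).

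I will treat the case of $\pi_x$; the case of $\pi_y$ is symmetric. Fix $w_x = (x_0, 0, t_0) \in \W_x$. Using $\pi_x(x,y,t) = (x, 0, t - xy/2)$, a point $(x,y,t) \in \R^3$ lies in $\pi_x^{-1}(B_x)$ iff
\begin{displaymath}
(x - x_0)^2 + (t - \tfrac{xy}{2} - t_0)^2 < \delta^2.
\end{displaymath}
The fibre through $w_x$ is $w_x \cdot \mathbb{L}_y = \{(x_0, s, t_0 + x_0 s/2) : s \in \R\}$, as computed from the group law. For a given point $(x,y,t)$ in the preimage, the natural candidate closest point on the fibre is $(x_0, y, t_0 + x_0 y/2)$. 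The $x$- and $y$-coordinate differences are immediately bounded: the $y$-coordinates agree, and $|x - x_0| < \delta$. The main computation is for the $t$-coordinate difference, which I rewrite as
\begin{displaymath}
t - t_0 - \tfrac{x_0 y}{2} \;=\; \bigl(t - \tfrac{xy}{2} - t_0\bigr) + \tfrac{y}{2}(x - x_0),
\end{displaymath}
so that its absolute value is bounded by $\delta + \tfrac{|y|}{2}\delta$.

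Here the hypothesis $(x,y,t) \in Q_0$ enters: it gives $|y| \le 1$, and hence the $t$-component is off by at most $\tfrac{3}{2}\delta$. Therefore the full Euclidean distance from $(x,y,t)$ to the chosen point on $w_x \cdot \mathbb{L}_y$ is at most $\sqrt{\delta^2 + (3\delta/2)^2} = \tfrac{\sqrt{13}}{2}\delta$, which gives the desired $A_1 \delta$ containment with an explicit absolute $A_1$. The argument for $\pi_y$ is the mirror image, using $\pi_y(x,y,t) = (0, y, t + xy/2)$, the fibre $w_y \cdot \mathbb{L}_x = \{(r, y_0, t_0 - y_0 r/2)\}$, and the bound $|x| \le 1$.

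I do not expect a substantial obstacle: the lemma is essentially book-keeping, and the only subtle point is to recognize that the $t$-component error has the shape $\tfrac{y}{2}(x - x_0)$, which is why one must truncate to $Q_0$ (or any bounded set) to obtain a linear-in-$\delta$ estimate. If one wanted a version valid outside $Q_0$, the constant $A_1$ would inevitably depend on the diameter of the region considered.
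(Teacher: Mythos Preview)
Your proof is correct. The paper omits the proof of this lemma entirely (treating it as an elementary computation), and your direct coordinate calculation---parametrizing the fibre $w_x \cdot \mathbb{L}_y$, rewriting the $t$-error as $(t - \tfrac{xy}{2} - t_0) + \tfrac{y}{2}(x-x_0)$, and invoking $|y|\le 1$ from $Q_0$---is precisely the natural argument the authors evidently had in mind.
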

In other words, the intersection of $\pi_{x}^{-1}(B_{x})$ with
$Q_{0}$ is contained in the Euclidean $A_{1}\delta$-neighbourhood
of the horizontal line $p_{x} \cdot \mathbb{L}_{y}$, and analogously if the roles of $x$ and $y$ are reverted.
\begin{cor}\label{cor1} Let $w_x\in \W_x \cap Q_{0}$, $w_y \in \W_y \cap Q_{0}$, and consider $T_{x} =[w_{x} \cdot \mathbb{L}_{y}](A_{1}\delta)$ and $T_{y} =[w_{y} \cdot \mathbb{L}_{x}](A_{1}\delta)$. Then,
\begin{displaymath} |T_{x} \cap T_{y}| \lesssim \delta^{3}. \end{displaymath}
\end{cor}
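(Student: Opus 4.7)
The plan is to make the tubes $T_x$ and $T_y$ explicit using the group law and to read off directly that their intersection is contained in a Euclidean box of sidelengths comparable to $\delta$.

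Write $w_x = (x_0,0,t_0)$ and $w_y = (0,y_0,s_0)$, with $|x_0|,|y_0|,|t_0|,|s_0| \leq 1$ since $w_x,w_y \in Q_0$. A direct computation with \eqref{eq:GroupProd} gives
\begin{displaymath} w_x \cdot \mathbb{L}_y = \{(x_0,\,s,\,t_0+\tfrac{x_0 s}{2}) : s \in \R\} \quad \text{and} \quad w_y \cdot \mathbb{L}_x = \{(r,\,y_0,\,s_0-\tfrac{y_0 r}{2}) : r \in \R\}. \end{displaymath}
Thus $T_x$ and $T_y$ are Euclidean $A_1\delta$-neighbourhoods of two affine lines in $\R^3$ whose direction vectors $(0,1,x_0/2)$ and $(1,0,-y_0/2)$ project to orthogonal vectors in the $xy$-plane.

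The key step is to unpack what it means for $(x,y,t)$ to lie in both tubes. Membership in $T_x$ forces $|x - x_0| \lesssim \delta$ and, for some $s$ with $|y - s| \lesssim \delta$, the estimate $|t - t_0 - x_0 s/2| \lesssim \delta$; since $|x_0| \leq 1$, this yields
\begin{displaymath} |x - x_0| \lesssim \delta \quad \text{and} \quad |t - t_0 - \tfrac{x_0 y}{2}| \lesssim \delta. \end{displaymath}
Symmetrically, membership in $T_y$ gives $|y - y_0| \lesssim \delta$ and $|t - s_0 + \tfrac{y_0 x}{2}| \lesssim \delta$. Combining these, $(x,y)$ is constrained to a Euclidean square of sidelength $\lesssim \delta$ centered at $(x_0,y_0)$, and once $(x,y)$ is fixed, say the third condition above constrains $t$ to an interval of length $\lesssim \delta$.

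Therefore $T_x \cap T_y$ is contained in a rectangular box of volume $\lesssim \delta^3$, which proves the corollary. There is essentially no obstacle here beyond bookkeeping the $O(\delta)$ errors; the geometric content is simply that the two horizontal lines have a uniform positive angle (their projections to the $xy$-plane are exactly orthogonal), so two transverse $\delta$-tubes in $\R^3$ meet in a set of volume $\lesssim \delta^3$.
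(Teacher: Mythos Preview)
Your proof is correct and follows essentially the same idea as the paper's: both arguments rest on the fact that the two horizontal lines $w_x\cdot\mathbb{L}_y$ and $w_y\cdot\mathbb{L}_x$ are uniformly transversal (their direction vectors project to orthogonal vectors in the $xy$-plane), so their $\delta$-tubes meet in a set of volume $\lesssim\delta^3$. The paper's proof is a one-sentence sketch invoking this transversality; you have written out the coordinate computation explicitly, which is a perfectly good way to make the sketch rigorous.
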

\begin{proof} The horizontal lines $w_{x} \cdot \mathbb{L}_{y}$ and $w_{y} \cdot \mathbb{L}_{x}$ hit $Q_{0}$, so they are quantitatively non-vertical; their angles with the $t$-axis are uniformly bounded from below. This implies that the intersection $T_{x} \cap T_{y}$ is fairly transversal, and the upper bound follows. \end{proof}

\begin{lemma}\label{l:core_proj} There exists a constant $A \geq 1$ such that the following holds. If $w_x =(a,0,b) \in \W_x$,
then
\begin{displaymath}\ell:= \pi_{y}(w_{x} \cdot \mathbb{L}_{y}) = \{(0,y,ay+b) : y \in \R\} \end{displaymath}
and
\begin{displaymath}
\pi_{y}\left(\pi_{x}^{-1}(B(w_x,\delta))\cap Q_{0} \right) \subseteq \ell(A\delta).
\end{displaymath}
\end{lemma}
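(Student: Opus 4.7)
My plan is to compute everything directly from the group law and estimate, using the fact that the restriction to $Q_{0}$ bounds the $y$-coordinate.

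\textbf{Step 1: Identify $\ell$.} First I would verify the formula for $\ell$ by direct computation. Using \eqref{eq:GroupProd}, for $w_{x} = (a,0,b)$ and $(0,y,0) \in \mathbb{L}_{y}$,
\begin{displaymath}
w_{x} \cdot (0,y,0) = \bigl(a,\, y,\, b + \tfrac{1}{2}(a y - 0)\bigr) = \bigl(a,\, y,\, b + \tfrac{ay}{2}\bigr).
\end{displaymath}
Applying the formula $\pi_{y}(x,y,t) = (0,y,t + \tfrac{xy}{2})$ gives
\begin{displaymath}
\pi_{y}\bigl(a, y, b + \tfrac{ay}{2}\bigr) = \bigl(0,\, y,\, b + \tfrac{ay}{2} + \tfrac{ay}{2}\bigr) = (0, y, ay + b),
\end{displaymath}
which is exactly $\ell$.

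\textbf{Step 2: Unpack $\pi_{x}^{-1}(B(w_{x},\delta))$.} For $p = (x,y,t) \in Q_{0}$ with $\pi_{x}(p) \in B(w_{x},\delta)$, the formula $\pi_{x}(x,y,t) = (x,0,t - \tfrac{xy}{2})$ gives
\begin{displaymath}
|x - a| < \delta \qquad \text{and} \qquad \bigl|\,t - \tfrac{xy}{2} - b\,\bigr| < \delta.
\end{displaymath}

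\textbf{Step 3: Estimate the distance from $\pi_{y}(p)$ to $\ell$.} The natural comparison point on $\ell$ is $(0,y,ay+b)$, obtained by matching the $y$-coordinate of $\pi_{y}(p) = (0, y, t + \tfrac{xy}{2})$. Then
\begin{displaymath}
\bigl|(t + \tfrac{xy}{2}) - (ay + b)\bigr| = \bigl|(t - \tfrac{xy}{2} - b) + (xy - ay)\bigr| \leq \delta + |x - a|\,|y| \leq \delta + \delta \cdot 1 = 2\delta,
\end{displaymath}
where in the last step I use $|y| \leq 1$ because $p \in Q_{0}$. Hence $\pi_{y}(p) \in \ell(A\delta)$ with, say, $A = 2$.

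\textbf{Main obstacle.} There is essentially no obstacle; the whole argument reduces to a one-line computation after the group law is expanded. The one subtle point is that the cross term $(x - a)y$ could in principle be large if $|y|$ were allowed to be arbitrary, and this is precisely why the intersection with $Q_{0}$ appears in the statement: it forces $|y| \leq 1$ and makes the perturbation linear in $\delta$.
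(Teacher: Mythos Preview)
Your proof is correct and matches the paper's approach: the paper verifies the formula for $\ell$ by the same direct computation, and for the inclusion it offers exactly your argument---letting $(x,t)$ range over the $\delta$-disk around $(a,b)$ and using $|y|\leq 1$---as an alternative to invoking Lemma~\ref{l:tubes} together with the local Lipschitz continuity of $\pi_{y}$.
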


\begin{proof}
An easy computation shows for arbitrary $(x,0,t)\in \W_x$ and $y\in \mathbb{R}$ that
\begin{equation}\label{formula}
\pi_{y}((x,0,t) \cdot (0,y,0))= \pi_{y}\left(x,y,t+\tfrac{xy}{2}\right)=\left(0,y,xy + t\right).
\end{equation}
This establishes the first claim with $(x,t)=(a,b)$ and $y\in \mathbb{R}$.

The second part of the lemma follows from Lemma \ref{l:tubes} since vertical projections are locally Lipschitz with respect to the Euclidean metric. Alternatively, one can use again \eqref{formula} and let
$(x,t)$ range in a $\delta$-disk centered at $(a,b)$.
\end{proof}

\begin{proposition}\label{p:fromProjToIncid}
Let  $P_x$ and $P_y$ be  $\delta$-separated sets in $\W_x\cap Q_{0}$ and $\W_y\cap Q_{0}$, respectively. Set
\begin{displaymath}
\mathcal{L}_y:=\{\pi_y(w_{x} \cdot \mathbb{L}_{y}):\; w_x\in
P_x\}.
\end{displaymath}
Then $\mathcal{L}_y$ is a $\delta$-separated set of lines in $\mathcal{Q}_{0}$.
Moreover, if $w_{x} \in P_{x}$, $w_{y} \in P_{y}$, and
\begin{equation}\label{eq:intersection}
\pi_x^{-1}( B(w_x,\delta)) \cap \pi_y^{-1}( B(w_y,\delta))\cap Q_{0}\neq \emptyset
\end{equation}
then $w_y$ is $(1+A)\delta$-incident to $\pi_y(w_{x} \cdot \mathbb{L}_{y})$.
\end{proposition}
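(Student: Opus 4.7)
The plan is to rely almost entirely on the explicit formula provided by Lemma~\ref{l:core_proj}, since both claims reduce to it plus bookkeeping and the triangle inequality.

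For the first claim, I would fix an arbitrary $w_x = (a,0,b) \in P_x$ (with $|a|, |b| \leq 1$ since $P_x \subset Q_0$) and read off from Lemma~\ref{l:core_proj} that $\pi_y(w_x \cdot \mathbb{L}_y) = \{(0,y,ay+b) : y \in \R\}$. Under the natural identification $\W_y \cong \R^2$ via $(0,y,t) \leftrightarrow (y,t)$, this is precisely the line $\ell_{(a,b)} \in \mathcal{Q}_0$. Moreover, the assignment $w_x = (a,0,b) \mapsto \ell_{(a,b)}$ reads off the parameter pair $(a,b)$, and under the identifications $\W_x \ni (a,0,b) \leftrightarrow (a,b) \in \R^2$ and $\ell_{(a,b)} \leftrightarrow (a,b)$ (via the metric $d$), it is an isometry. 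Therefore the $\delta$-separation of $P_x$ in $\W_x$ transfers verbatim to the $\delta$-separation of $\mathcal{L}_y$ in $(\mathcal{Q}_0, d)$.

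For the second claim, I would pick any point $p$ in the intersection \eqref{eq:intersection}, which is nonempty by hypothesis. From $p \in \pi_y^{-1}(B(w_y,\delta))$ we get
\begin{displaymath}
|\pi_y(p) - w_y| < \delta.
\end{displaymath}
From $p \in \pi_x^{-1}(B(w_x,\delta)) \cap Q_0$, the second part of Lemma~\ref{l:core_proj} places $\pi_y(p)$ in the $A\delta$-neighborhood of $\ell := \pi_y(w_x \cdot \mathbb{L}_y)$. The triangle inequality then gives $w_y \in \ell((1+A)\delta)$, which is exactly what it means for $w_y$ to be $(1+A)\delta$-incident to $\pi_y(w_x \cdot \mathbb{L}_y)$.

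The only mildly delicate point is the bookkeeping around the identification $\W_y \cong \R^2$: the Section~2 definitions of the metric $d$ on $\mathcal{Q}_0$ and of $\delta$-incidence live in an abstract $\R^2$, whereas the lines here sit in the $yt$-plane. Under the obvious renaming of coordinates $(y,t) \leftrightarrow (x,y)$ this is purely notational, and there is no further substantive obstacle.
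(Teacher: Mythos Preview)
Your proposal is correct and follows essentially the same route as the paper: both parts are reduced to Lemma~\ref{l:core_proj}, the first via the observation that $(a,0,b)\mapsto \ell_{(a,b)}$ is an isometry from $\W_x$ to $(\mathcal{Q}_0,d)$, and the second by applying $\pi_y$ to a point in the intersection and using the triangle inequality. The paper phrases the second step as a set inclusion rather than tracking a single point, but the argument is the same.
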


\begin{proof}
We first observe that $\mathcal{L}_y$ is a $\delta$-separated set of
lines. Indeed, if $w_x,w_x' \in P_{x}$ are distinct, then Lemma \ref{l:core_proj} shows that
$\pi_y(w_x\cdot \mathbb{L}_y)$ and $\pi_y(w_x'\cdot \mathbb{L}_y)$
are two lines in $\W_{y} \cong \mathbb{R}^2$ of the form
\begin{displaymath}
\ell:=\{(y, a y + b):\,  y\in\mathbb{R}\}\quad \text{and}\quad
\ell':=\{(y, a' y + b'):\,  y\in\mathbb{R}\}
\end{displaymath}
with $d(\ell,\ell')=|(a,b)-(a',b')|\geq \delta$, and $|a|,|a'|,|b|,|b'| \leq 1$ (the latter condition ensures that $\ell,\ell' \in \mathcal{Q}_{0}$, recalling we only defined the metric $d$ on $\mathcal{Q}_{0}$).

Next we assume that \eqref{eq:intersection} holds. Using Lemma \ref{l:core_proj}, this implies that
\begin{displaymath}
[\pi_y(w_x\cdot \mathbb{L}_y)](A\delta) \cap B(w_{y},\delta) \supset \pi_y\left( \pi_x^{-1}( B(w_x,\delta))\cap Q_{0}\right) \cap B(w_y,\delta)\neq \emptyset.
\end{displaymath}
We infer that $w_{y} \in [\pi_{y}(w_{x} \cdot \mathbb{L}_{y})]([1 + A]\delta)$, as claimed. \end{proof}

\begin{proof}[Proof of Theorem
\ref{main}]
First, we may assume that $K$ is compact, by the inner regularity of Lebesgue measure. Then, we may assume
that $K \subset \tfrac{1}{2}Q_{0}$, since both sides of \eqref{form1} scale
in the same way with respect to the Heisenberg dilations $\delta_{r}$. Indeed, since the
Jacobian determinant of $\delta_r$ is $r^4$, we have
$
 |\delta_r K| = r^4 |K|.
$
On the other hand, dilations commute with vertical projections,
and the maps $\delta_{r}|_{\mathbb{W}_x}$ and
$\delta_{r}|_{\mathbb{W}_y}$ have Jacobian determinant $r^3$, so
\begin{displaymath}
|\pi_{x}(\delta_r K)|^{2/3} \cdot |\pi_{y}(\delta_r K)|^{2/3} =
|\delta_r (\pi_{x}(K))|^{2/3} \cdot |\delta_r(\pi_{y}(K))|^{2/3} =
r^4\, |\pi_{x}(K)|^{2/3} \cdot |\pi_{y}(K)|^{2/3}.
\end{displaymath}
Thus, we may and will assume that $K\subset \tfrac{1}{2}Q_{0}$, which implies that $\pi_{x}(K) \subset \W_{x} \cap Q_{0}$ and $\pi_{y}(K) \subset \W_{y} \cap Q_{0}$. Since $\pi_{x}(K)$ and $\pi_{y}(K)$ are bounded, there exist finite maximal $\delta$-separated subsets $P_{x} \subset \pi_{x}(K)$ and $P_{y} \subset \pi_{y}(K)$ for any  "scale" parameter $0 < \delta < 1$. Fix $\varepsilon>0$. Then for all $\delta  > 0$  small enough (depending on $K$ and $\varepsilon$), we have
\begin{equation}\label{eq:parameter_bound} \delta^{2}[\card P_{x}] \lesssim |\pi_{x}(K)|+\varepsilon \quad \text{and} \quad \delta^{2}[\card P_{y}] \lesssim |\pi_{y}(K)|+\varepsilon.\end{equation}
To improve clarity, we exceptionally use the notation "$\card$" for cardinality within this proof. The parameter $\varepsilon$  is used here only to handle the case where $|\pi_{x}(K)|=0$ or $|\pi_{y}(K)|=0$.
Now, it suffices to prove for $\delta$ as in \eqref{eq:parameter_bound} that
\begin{equation}\label{form2} |K| \lesssim [\card P_{x}]^{2/3}[\card P_{y}]^{2/3} \cdot \delta^{8/3}.  \end{equation}
This will yield
\begin{equation*} |K| \lesssim (|\pi_{x}(K)|+\varepsilon)^{2/3} \cdot( |\pi_{y}(K)|+\varepsilon)^{2/3}, \end{equation*}
and the theorem follows by letting $\varepsilon \to 0$.

We will establish \eqref{form2} as a corollary of Theorem \ref{mainIncidence}. In order to relate \eqref{form2} to
a set $\mathcal{I}(P,\mathcal{L})$ of incidences, we first recall that
\begin{displaymath}
\pi_x(K) \subset \bigcup_{w_x \in  P_x} B(w_x,\delta) \quad
\text{and}  \quad \pi_y(K) \subset \bigcup_{w_y \in  P_y}
B(w_y,\delta),
\end{displaymath}
and hence
\begin{align*}
K&\subseteq  \bigcup_{(w_x,w_y)\in P_x\times P_y} \pi_x^{-1}( B(w_x,\delta)) \cap \pi_y^{-1}( B(w_y,\delta))\cap Q_{0}.
\end{align*}
It follows from Lemma \ref{l:tubes} and Corollary \ref{cor1} that
\begin{equation}\label{eq:K_upper_bound}
|K|\lesssim \delta^3 \card \{(w_x,w_y)\in
P_x\times P_y:\,
 \pi_x^{-1}( B(w_x,\delta)) \cap \pi_y^{-1}( B(w_y,\delta))\cap Q_{0}
\neq \emptyset \}.
\end{equation}
To control the cardinality that appears on the right, we use
Proposition
\ref{p:fromProjToIncid}. It allows us to deduce from  \eqref{eq:K_upper_bound}  that
\begin{displaymath}
|K|\lesssim \delta^3\, \card \mathcal{I}_{(1 + A)\delta}(P_y,\mathcal{L}_y) ,
\end{displaymath}
where $\mathcal{I}_{(1 + A)\delta}(P_y,\mathcal{L}_y)$ is the set of $(1+A)\delta$-incidences between the points in $P_y$ and the lines in $\mathcal{L}_y := \{\pi_{y}(w_{x} \cdot \mathbb{L}_{y}) : w_{x} \in P_{x}\} \subset \mathcal{Q}_{0}$.
Since $\card \mathcal{L}_y = \card P_x$, the proof of \eqref{form2}, and hence Theorem \ref{main},  is then reduced to showing
\begin{equation}\label{eq:desiredIncidences}
 \card \mathcal{I}_{(1 + A)\delta}(P_y,\mathcal{L}_y) \lesssim [\card \mathcal{L}_y]^{\frac{2}{3}}[\card P_y]^{\frac{2}{3}} \cdot \delta^{-\frac{1}{3}}.
\end{equation}
But since $P_y$ consists of $\delta$-separated points, and $\mathcal{L}_y$ of $\delta$-separated lines, \eqref{eq:desiredIncidences} follows immediately from the incidence bound in Theorem \ref{mainIncidence} (as pointed out in Remark \ref{rem1}, the theorem remains valid for $C\delta$-incidences, and now we use this with $C = 1 + A$.) \end{proof}

\section{Applications of the Loomis-Whitney inequality in the Heisenberg group}

In this section, we derive the Gagliardo-Nirenberg-Sobolev inequality, Theorem \ref{mainIntro3}, from the Loomis-Whitney inequality, Theorem \ref{mainIntro2}. The arguments presented in this section are very standard, and we claim no originality. As a corollary of Theorem \ref{mainIntro3}, we obtain the isoperimetric inequality in $\He$ (with a non-optimal constant). At the end of the section, we also show how the Loomis-Whitney inequality can be used, directly, to infer a variant of the isoperimetric inequality, without passing through the Sobolev inequality.

We start by recalling the statement of Theorem \ref{mainIntro3}:

\begin{thm}\label{t:sobolev} Let $f \in BV(\He)$. Then,
\begin{equation}\label{eq:GNS} \|f\|_{4/3} \lesssim \sqrt{\|Xf\|\|Yf\|}. \end{equation}
\end{thm}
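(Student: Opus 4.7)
The plan is to derive \eqref{eq:GNS} from the Heisenberg Loomis--Whitney inequality (Theorem \ref{mainIntro2}) via the standard template: apply Loomis--Whitney to the superlevel sets of $f$ and reassemble through the $BV$ coarea formula for horizontal derivatives. By mollification adapted to the Heisenberg group structure, together with the reduction $f \leadsto |f|$ (which only improves matters, since $\|X|f|\| \leq \|Xf\|$ and similarly for $Y$), I would first reduce to the case of nonnegative $f \in C^{\infty}_{c}(\He)$. For such $f$, set $E_{t} := \{f > t\}$ and write $f = \int_{0}^{\infty} \1_{E_{t}}\, dt$.

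Minkowski's integral inequality in $L^{4/3}(\He)$, combined with Theorem \ref{mainIntro2} applied to each $E_{t}$, then gives
\[ \|f\|_{4/3} \leq \int_{0}^{\infty} |E_{t}|^{3/4}\, dt \lesssim \int_{0}^{\infty} |\pi_{x}(E_{t})|^{1/2}\,|\pi_{y}(E_{t})|^{1/2}\, dt, \]
after which Cauchy--Schwarz produces
\[ \|f\|_{4/3} \lesssim \Bigl(\int_{0}^{\infty} |\pi_{x}(E_{t})|\, dt\Bigr)^{1/2} \Bigl(\int_{0}^{\infty} |\pi_{y}(E_{t})|\, dt\Bigr)^{1/2}. \]

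The key geometric step is the pair of projection-to-variation bounds
\[ |\pi_{x}(E)| \lesssim \|Y \1_{E}\| \quad \text{and} \quad |\pi_{y}(E)| \lesssim \|X \1_{E}\| \]
for bounded measurable $E \subset \He$. These I would obtain by Fubini along the fibres of $\pi_{x}$ and $\pi_{y}$: as recalled in Section \ref{s:LW}, these fibres are the left cosets $w\cdot \mathbb{L}_{y}$ and $w\cdot \mathbb{L}_{x}$, which are precisely the integral curves of the left-invariant horizontal vector fields $Y$ and $X$. On any such curve intersecting a bounded $E$, the restriction of $\1_{E}$ has one-dimensional total variation at least $2$ (the curve must both enter and exit), so disintegrating $\|Y\1_{E}\|$ (resp.\ $\|X\1_{E}\|$) along the flow of $Y$ (resp.\ $X$) and integrating against transverse Lebesgue measure on $\W_{x}$ (resp.\ $\W_{y}$) yields the claimed inequality.

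Finally, the coarea formula for $BV$ functions in $\He$ (Franchi--Serapioni--Serra Cassano, Garofalo--Nhieu) provides $\|Xf\| = \int_{0}^{\infty} \|X \1_{E_{t}}\|\, dt$ and the analogue for $Y$; combined with the projection-to-variation estimates, this bounds $\int_{0}^{\infty} |\pi_{x}(E_{t})|\, dt \lesssim \|Yf\|$ and $\int_{0}^{\infty} |\pi_{y}(E_{t})|\, dt \lesssim \|Xf\|$, and substituting into the Cauchy--Schwarz bound above yields $\|f\|_{4/3} \lesssim \sqrt{\|Xf\|\,\|Yf\|}$. The only point requiring care is ensuring the projection-to-variation estimates at the right level of generality (for $BV$ level sets, not just smooth ones) and cleanly invoking the Heisenberg $BV$ coarea formula; both are classical in this setting, so I expect no genuine obstacle, and the proof itself should be short.
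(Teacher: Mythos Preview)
Your argument is correct and rests on the same geometric core as the paper's: the fibres of $\pi_{x}$ and $\pi_{y}$ are integral curves of $Y$ and $X$, so projections of level sets are controlled by the corresponding directional variations. The packaging differs, though. The paper works with dyadic shells $F_{k} = \{2^{k-1} \leq |f| \leq 2^{k}\}$ and proves directly (Lemma~\ref{l:sobolev}) that $|\pi_{x}(F_{k})| \lesssim 2^{-k}\int_{F_{k-1}}|Yf|$ via the fundamental theorem of calculus on each fibre, then reassembles with Cauchy--Schwarz and $\ell^{1}\hookrightarrow \ell^{4/3}$; you instead use the continuous layer-cake, Minkowski, and a coarea formula. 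Your route is perhaps more conceptual, the paper's more self-contained.

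One point to watch: the coarea identity you invoke is the \emph{componentwise} version $\int_{0}^{\infty}\|X\1_{E_{t}}\|\,dt = \|Xf\|$, whereas the Heisenberg coarea formula in the references you cite is stated for the full horizontal perimeter $\|\nabla_{\He}\1_{E_{t}}\|$. The componentwise statement is true for smooth $f$ (combine Sard with the smooth coarea formula and the identification of $X\1_{E_{t}}$ with the first component of the horizontal normal times the perimeter measure), but it deserves a sentence of justification. Alternatively, you can bypass both the projection-to-variation bound and coarea in one stroke: since $\1_{\pi_{x}(E_{t})}(w) = \1_{\{\sup_{\pi_{x}^{-1}(w)} f > t\}}$, Fubini gives $\int_{0}^{\infty}|\pi_{x}(E_{t})|\,dt = \int_{\W_{x}}\sup_{w\cdot\mathbb{L}_{y}} f\,dw \leq \int_{\W_{x}}\int_{\R}|Yf(w\cdot(0,y,0))|\,dy\,dw = \|Yf\|_{1}$, using only that $f$ vanishes at infinity on each fibre and that the change of variables has unit Jacobian. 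This is essentially the paper's Lemma~\ref{l:sobolev} in integrated form.
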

Recall that $f \in BV(\He)$ if $f \in L^{1}(\He)$, and the distributional derivatives $Xf,Yf$ are finite signed Radon measures. Smooth compactly supported functions are dense in $BV(\He)$ in the sense that if $f \in BV(\He)$, then there exists a sequence $\{\varphi_{j}\}_{j \in \N} \subset C^{\infty}_{c}(\R^{3})$ such that $\varphi_{j} \to f$ almost everywhere (and in $L^{1}(\He)$ if desired), and $\|Z\varphi_{j}\| \to \|Z f\|$ for $Z \in \{X,Y\}$. For a reference, see \cite[Theorem 2.2.2]{MR1437714}. With this approximation in hand, it suffices to prove Theorem \ref{t:sobolev} for, say, $f \in C^{1}_{c}(\R^{3})$. The following lemma contains most of the proof:
\begin{lemma}\label{l:sobolev} Let $f \in C^{1}_{c}(\R^{3})$, and write
\begin{equation}\label{eq:Fk} F_{k} := \{p \in \R^{3} : 2^{k - 1} \leq |f(p)| \leq 2^{k}\}, \qquad k \in \Z. \end{equation}
Then,
\begin{equation}\label{form20} |\pi_{x}(F_{k})| \leq 2^{-k + 2} \int_{F_{k - 1}} |Yf| \quad \text{and} \quad |\pi_{y}(F_{k})| \leq 2^{-k + 2} \int_{F_{k - 1}} |Xf|. \end{equation}
\end{lemma}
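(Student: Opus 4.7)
The plan is to prove the two inequalities fiber by fiber, using a one-dimensional fundamental theorem of calculus argument on the horizontal line fibers of $\pi_x$ (for the first) and $\pi_y$ (for the second). By the obvious symmetry $x \leftrightarrow y$, it suffices to establish the bound for $|\pi_x(F_k)|$.

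First I would parametrize each fiber $\pi_x^{-1}\{w_x\} = w_x \cdot \mathbb{L}_y$ by $\gamma_{w_x}(s) := w_x \cdot (0,s,0)$. Writing $w_x = (a,0,b)$, a direct group-law computation gives $\gamma_{w_x}(s) = (a, s, b + as/2)$, so $\gamma_{w_x}'(s) = (0,1,a/2) = Y|_{\gamma_{w_x}(s)}$; hence $\tfrac{d}{ds}f(\gamma_{w_x}(s)) = Yf(\gamma_{w_x}(s))$. The change of variables $(w_x, s) \mapsto \gamma_{w_x}(s)$ from $\mathbb{W}_x \times \mathbb{R}$ to $\mathbb{R}^3$ has Jacobian determinant $\pm 1$, so by Fubini,
\begin{equation*}
\int_{F_{k-1}} |Yf| \, dp = \int_{\mathbb{W}_x}\int_{\mathbb{R}} |Yf(\gamma_{w_x}(s))| \, \mathbf{1}_{F_{k-1}}(\gamma_{w_x}(s)) \, ds \, dw_x.
\end{equation*}
The aim therefore is to show that, for each $w_x \in \pi_x(F_k)$, the inner integral is at least $2^{k-2}$; integrating over $\pi_x(F_k)$ will then yield $2^{k-2}|\pi_x(F_k)| \leq \int_{F_{k-1}}|Yf|$, which is the first claim.

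For the fiberwise lower bound, fix $w_x \in \pi_x(F_k)$, so there is $s_0 \in \mathbb{R}$ with $|f(\gamma_{w_x}(s_0))| \geq 2^{k-1}$. Since $f \in C^1_c(\mathbb{R}^3)$, $|f(\gamma_{w_x}(s))| \to 0$ as $|s| \to \infty$. I would then extract a segment on which $|f|$ falls from $2^{k-1}$ to $2^{k-2}$ and stays in this band: set
\begin{equation*}
s^* := \inf\{s > s_0 : |f(\gamma_{w_x}(s))| \leq 2^{k-2}\}, \qquad s_1 := \sup\{s \in [s_0, s^*] : |f(\gamma_{w_x}(s))| \geq 2^{k-1}\}.
\end{equation*}
Continuity and the definitions force $|f(\gamma_{w_x}(s^*))| = 2^{k-2}$, $|f(\gamma_{w_x}(s_1))| = 2^{k-1}$, and $|f(\gamma_{w_x}(s))| \in [2^{k-2}, 2^{k-1}]$ for all $s \in [s_1, s^*]$, i.e., $\gamma_{w_x}(s) \in F_{k-1}$ throughout. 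Since $|f| \geq 2^{k-2} > 0$ there, $f$ keeps constant sign on $[s_1, s^*]$, hence
\begin{equation*}
\int_{\mathbb{R}} |Yf(\gamma_{w_x}(s))| \, \mathbf{1}_{F_{k-1}}(\gamma_{w_x}(s)) \, ds \geq \int_{s_1}^{s^*} \left|\tfrac{d}{ds} f(\gamma_{w_x}(s))\right| ds \geq \bigl|f(\gamma_{w_x}(s_1)) - f(\gamma_{w_x}(s^*))\bigr| = 2^{k-2}.
\end{equation*}
Integrating in $w_x \in \pi_x(F_k)$ and combining with the Fubini identity above gives $|\pi_x(F_k)| \leq 2^{-k+2} \int_{F_{k-1}} |Yf|$. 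The companion inequality with $\pi_y$ and $Xf$ is obtained by the identical argument, replacing $Y$-flow fibers by $X$-flow fibers $\pi_y^{-1}\{w_y\} = w_y \cdot \mathbb{L}_x$.

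The only step requiring any care is the IVT-style extraction of the interval $[s_1, s^*]$, where the particular value $2^{k-2}$ of the drop in $|f|$ is what forces the factor $2^{-k+2}$ and, crucially, the appearance of the \emph{neighbouring} superlevel set $F_{k-1}$ rather than $F_k$ on the right-hand side. Everything else (the identification of $\tfrac{d}{ds}f(\gamma_{w_x}) = Yf$, the unit Jacobian, and the Fubini step) is a routine computation.
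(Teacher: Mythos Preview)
Your proposal is correct and follows essentially the same approach as the paper's proof: both argue fiberwise along the horizontal lines $w_x\cdot\mathbb{L}_y$, extract (via the intermediate value theorem) a subinterval on which $|f|$ drops from $2^{k-1}$ to $2^{k-2}$ and hence lies in $F_{k-1}$, apply the fundamental theorem of calculus with $(f\circ\gamma)'=Yf$, and then integrate over $\pi_x(F_k)$ using the unit-Jacobian change of variables. If anything, your write-up is slightly more explicit than the paper's about why $|f(\gamma(s_1))-f(\gamma(s^*))|=2^{k-2}$ (you note that $f$ keeps constant sign on the interval), a point the paper leaves implicit.
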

\begin{proof} By symmetry, it suffices to prove the first inequality in \eqref{form20}. Let $w = (x,0,t) \in \pi_{x}(F_{k})$, and fix $p = w \cdot (0,y,0) \in F_{k}$ such that $\pi_{x}(p) = w$. In particular, $|f(p)| \geq 2^{k - 1}$. Recall the notation $\mathbb{L}_{y} = \{(0,y,0) : y \in \R\}$. Since $f$ is compactly supported, we may pick another point $p' \in w \cdot \mathbb{L}_{y}$ such that $f(p') = 0$. Since $|f|$ is continuous, we infer that there is a non-degenerate line segment $I$ on the line $w \cdot \mathbb{L}_{y}$ such that $2^{k - 2} \leq |f(q)| \leq 2^{k - 1}$ for all $q \in I$ (hence $I \subset F_{k - 1}$), and $|f|$ takes the values $2^{k - 2}$ and $2^{k - 1}$, respectively, at the endpoints $q_{i} = w \cdot (0,y_{i},0)$ of $I$, $i \in \{1,2\}$. Define $\gamma(y) := w \cdot (0,y,0) = (x,y,t + \tfrac{1}{2}xy)$. With this notation,
\begin{displaymath} 2^{k - 2} \leq |f(q_{1}) - f(q_{2})| \leq \int_{y_{1}}^{y_{2}} |(f \circ \gamma)'(y)| \, dy \leq \int_{\{y : (x,y,t + \frac{1}{2}xy) \in F_{k - 1}\}} |Yf(x,y,t + \tfrac{1}{2}xy)| \, dy. \end{displaymath}
Writing $\Phi(x,y,t) := (x,0,t) \cdot (0,y,0) = (x,y,t + \tfrac{1}{2}xy)$, and integrating over $(x,t) \cong (x,0,t) \in \pi_{x}(F_{k}) \subset \W_{x}$, it follows that
\begin{equation}\label{form21} \int_{\pi_{x}(F_{k})} \left[ \int_{\{y : \Phi(x,y,t) \in F_{k - 1}\}} |Yf(\Phi(x,y,t))| \, dy \right] \, dx \, dt \geq 2^{k - 2}|\pi_{x}(F_{k})|. \end{equation}
Finally, we note that $J_{\Phi} = \mathrm{det\,} D\Phi \equiv 1$. Therefore, using Fubini's theorem, and performing a change of variables to the left hand side of \eqref{form21}, we see that
\begin{align*} 2^{k - 2}|\pi_{x}(F_{k})| & \leq \int_{\{(x,y,t) \in \R^{3} : \Phi(x,y,t) \in F_{k - 1}\}} |Yf(\Phi(x,y,t))| \, dx \, dy \, dt\\
& = \int_{F_{k - 1}} |Yf(x,y,t)| \, dx \, dy \, dt. \end{align*}
This completes the proof. \end{proof}
We are then prepared to prove Theorem \ref{t:sobolev}:
\begin{proof}[Proof of Theorem \ref{t:sobolev}] Fix $f \in C^{1}_{c}(\R^{3})$, and define the sets $F_{k}$, $k \in \Z$, as in \eqref{eq:Fk}. Using first Theorem \ref{main}, then Lemma \ref{l:sobolev}, then Cauchy-Schwarz, and finally the embedding $\ell^{1} \hookrightarrow \ell^{4/3}$, we estimate as follows:
\begin{align*} \int |f|^{4/3} \sim \sum_{k \in \Z} 2^{4k/3}|F_{k}| & \leq \sum_{k \in \Z} 2^{4k/3} |\pi_{x}(F_{k})|^{2/3}|\pi_{y}(F_{k})|^{2/3}\\
& \lesssim \sum_{k \in \Z} \Big(\int_{F_{k - 1}} |Xf| \Big)^{2/3} \Big(\int_{F_{k - 1}} |Yf| \Big)^{2/3}\\
& \leq \Big[ \sum_{k \in \Z} \Big( \int_{F_{k - 1}} |Xf| \Big)^{4/3} \Big]^{1/2} \Big[ \sum_{k \in \Z} \Big( \int_{F_{k - 1}} |Yf| \Big)^{4/3} \Big]^{1/2}\\
& \leq \Big[\sum_{k \in \Z} \int_{F_{k - 1}} |Xf| \Big]^{2/3} \Big[ \sum_{k \in \Z} \int_{F_{k - 1}} |Yf| \Big]^{2/3} = \|Xf\|_{1}^{2/3}\|Yf\|_{1}^{2/3}. \end{align*}
Raising both sides to the power $3/4$ completes the proof. \end{proof}

We conclude the paper by discussing isoperimetric inequalities. A measurable set $E \subset \He$ has \emph{finite horizontal perimeter} if $\chi_{E} \in BV(\He)$. Here $\chi_{E}$ is the characteristic function of $E$. Note that our definition of $BV(\He)$ implies, in particular, that $|E| < \infty$. We follow common practice, and write $P_{\mathbb{H}}(E) := \|\nabla_{\He} \chi_{E}\|$. For more information on sets of finite horizontal perimeter, see \cite{FSSC}. Now, applying Theorem \ref{t:sobolev} to $f = \chi_{E}$, we infer Pansu's isoperimetric inequality (with a non-optimal constant):
\begin{thm}[Pansu] There exists a constant $C>0$ such that
\begin{equation}\label{eq:isop}
|E|^{\frac{3}{4}}\leq C P_{\mathbb{H}}(E)
\end{equation}
for any measurable set $E \subset \He$ of finite horizontal perimeter.
\end{thm}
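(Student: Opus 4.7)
The plan is to apply Theorem \ref{t:sobolev} directly to the characteristic function $f = \chi_{E}$ and then match both sides with the quantities appearing in \eqref{eq:isop}. Since $E$ has finite horizontal perimeter, by the definition just above the theorem statement we have $\chi_{E} \in BV(\He)$, so Theorem \ref{t:sobolev} is applicable and yields
\begin{equation*}
\|\chi_{E}\|_{4/3} \lesssim \sqrt{\|X\chi_{E}\|\,\|Y\chi_{E}\|}.
\end{equation*}

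Next I would identify the two sides with the quantities in \eqref{eq:isop}. The left-hand side equals $|E|^{3/4}$, since $\int_{\He}|\chi_{E}|^{4/3} = |E|$. For the right-hand side, the elementary inequality $\sqrt{ab} \leq \tfrac{1}{2}(a+b)$ gives
\begin{equation*}
\sqrt{\|X\chi_{E}\|\,\|Y\chi_{E}\|} \leq \tfrac{1}{2}\bigl(\|X\chi_{E}\| + \|Y\chi_{E}\|\bigr),
\end{equation*}
and the total variations of the scalar measures $X\chi_{E}$ and $Y\chi_{E}$ are each dominated by the total variation of the vector-valued measure $\nabla_{\He}\chi_{E} = (X\chi_{E},Y\chi_{E})$, so the right-hand side is at most $\|\nabla_{\He}\chi_{E}\| = P_{\mathbb{H}}(E)$. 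Chaining these observations yields $|E|^{3/4} \lesssim P_{\mathbb{H}}(E)$, as desired.

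I expect no real obstacle here: essentially all of the substantive work is already contained in Theorem \ref{t:sobolev} (and, through it, in Theorem \ref{mainIntro2} and the underlying planar incidence bound). The only ancillary fact needed is the pointwise domination $\|X\chi_{E}\|, \|Y\chi_{E}\| \leq \|\nabla_{\He}\chi_{E}\|$ for the total variation norms, which follows immediately from the variational definition of total variation by embedding a scalar test function $\varphi$ into the vector-valued test class as $(\varphi,0)$ or $(0,\varphi)$. The non-optimality of the constant $C$ in \eqref{eq:isop} is exactly what one would expect from this route, reflecting both the AM-GM step and the implicit constant carried over from Theorem \ref{t:sobolev}.
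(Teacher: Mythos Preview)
Your proposal is correct and matches the paper's own approach: the paper simply says to apply Theorem \ref{t:sobolev} to $f = \chi_{E}$, and you have spelled out exactly the details (identifying $\|\chi_{E}\|_{4/3} = |E|^{3/4}$ and bounding $\sqrt{\|X\chi_{E}\|\|Y\chi_{E}\|}$ by $P_{\mathbb{H}}(E)$ via AM--GM and the componentwise domination of total variations) that the paper leaves implicit.
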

We remark that the \emph{a priori} assumption $|E| < \infty$ is critical here; for example the theorem evidently fails for $E = \He$, for which $|E| = \infty$ but $\|\nabla_{\He} \chi_{E}\| = 0$. We conclude the paper by deducing a weaker version of \eqref{eq:isop} (even) more directly from the Loomis-Whitney inequality. Namely, we claim that
\begin{equation}\label{weakIP} |E|^{\frac{3}{4}}\leq C \calH^{3}_{d}(\partial E) \end{equation}
for any bounded measurable set $E \subset \He$. This inequality is, in general, weaker than \eqref{eq:isop}: at least for open sets $E \subset \He$, the property $\calH^{3}_{d}(\partial E) < \infty$ implies that $P_{\He}(E) < \infty$, and then $P_{\He}(E) \lesssim \calH^{3}_{d}(\partial E)$, see \cite[Theorem 4.18]{MR2836591}. However, if $E$ is a bounded open set with $C^{1}$ boundary, then $\mathcal{H}^{3}_{d}(\partial E) \sim P_{\He}(E)$, see \cite[Corollary 7.7]{FSSC}.

To prove \eqref{weakIP}, we need the following auxiliary result, see \cite[Lemma 3.4]{MR3992573}:

\begin{lemma} There exists a constant $C > 0$ such that the following holds. Let $\W \subset \mathbb{H}$ be a vertical subgroup. Then,
\begin{equation}\label{per2}
|\pi_{\W}(A)|\leq C \mathcal{H}^3_d(A), \qquad A \subset \He.
\end{equation}
\end{lemma}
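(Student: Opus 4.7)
The strategy is a standard covering argument: control the Lebesgue measure of $\pi_{\W}(A)$ by summing contributions from a cover of $A$ by Kor\'anyi balls, using the fact that any Kor\'anyi ball of radius $r$ has a vertical projection of planar measure $\lesssim r^{3}$ in $\W$.

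First, for any $\varepsilon > 0$ I would choose a countable cover of $A$ by Kor\'anyi balls $B_{d}(p_{i}, r_{i})$ with
\begin{equation*}
\sum_{i} r_{i}^{3} \lesssim \mathcal{H}^{3}_{d}(A) + \varepsilon.
\end{equation*}
Such a cover exists by the very definition of the Hausdorff $3$-measure, once one observes that any set of diameter $\rho$ is contained in a Kor\'anyi ball of radius $\rho$. By countable subadditivity of Lebesgue measure on $\W$,
\begin{equation*}
|\pi_{\W}(A)| \;\leq\; \sum_{i} |\pi_{\W}(B_{d}(p_{i}, r_{i}))|,
\end{equation*}
so the proof reduces to the single-ball estimate
\begin{equation}\label{eq:oneball}
|\pi_{\W}(B_{d}(p, r))| \;\leq\; C r^{3}, \qquad p \in \He, \ r > 0.
\end{equation}

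To prove \eqref{eq:oneball}, I would take $\W = \W_{x}$ (the other case is symmetric) and write $B_{d}(p, r) = p \cdot B_{d}(0, r)$. The ball $B_{d}(0, r)$ is, up to absolute constants, the Euclidean box $\{(x,y,t) : x^{2} + y^{2} \leq r^{2},\ |t| \leq r^{2}\}$. Any $q \in B_{d}(0, r)$ decomposes as $q = (a, 0, c) \cdot (0, b, 0) = (a, b, c + ab/2)$ with $|a|, |b| \lesssim r$ and $|c| \lesssim r^{2}$. For $p = (x_{0}, y_{0}, t_{0})$, a short computation with the group law \eqref{eq:GroupProd} and the formula $\pi_{x}(X,Y,T) = (X, 0, T - XY/2)$ yields
\begin{equation*}
\pi_{x}(p \cdot q) \;=\; \pi_{x}(p) + (a,\, 0,\, c - a y_{0}),
\end{equation*}
with ordinary vector addition on the right. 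Hence $\pi_{x}(B_{d}(p, r))$ is contained in a translate, inside $\W_{x} \cong \R^{2}$, of the $\R^{2}$-affine shear $(a,c) \mapsto (a, c - a y_{0})$ applied to the rectangle $\{|a| \leq r,\ |c| \lesssim r^{2}\}$; since planar shears preserve Lebesgue area, this image has area $\lesssim r^{3}$, proving \eqref{eq:oneball}.

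Combining the cover bound with \eqref{eq:oneball} and letting $\varepsilon \to 0$ gives $|\pi_{\W}(A)| \lesssim \mathcal{H}^{3}_{d}(A)$. The only non-routine point is \eqref{eq:oneball}: the paper stresses that $\pi_{\W}$ is \emph{not} Lipschitz in the Kor\'anyi metric, so one might fear that $|\pi_{\W}(B_{d}(p, r))|$ could blow up as $|y_{0}| \to \infty$. The explicit formula above shows that the only $p$-dependence of the image is a planar shear in $\W_{x}$, which is area-preserving, so the constant in \eqref{eq:oneball} is uniform in $p$.
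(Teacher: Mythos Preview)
Your argument is correct. The covering reduction to the single-ball estimate is standard, and your computation of $\pi_{x}(p\cdot q)=\pi_{x}(p)+(a,0,c-ay_{0})$ is right; the key observation that the $b$-coordinate drops out (since $(0,b,0)\in\mathbb{L}_{y}$ lies in the fibre direction of $\pi_{x}$) and that the only $p$-dependence is an area-preserving planar shear is exactly what makes the constant in \eqref{eq:oneball} uniform despite $\pi_{\W}$ not being Kor\'anyi-Lipschitz.

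Note, however, that the paper does not actually prove this lemma: it quotes it as \cite[Lemma 3.4]{MR3992573}. Your proof is essentially the argument one finds there, so there is nothing to compare beyond saying that you have reconstructed the cited proof.
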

\begin{proof}[Proof of \eqref{weakIP}]
Let $E\subset\mathbb{H}$ be bounded and measurable. We first claim that
\begin{align}\label{uguproj}
&\pi_{x}(E)\subseteq \pi_{x}(\partial E),\\
\label{uguproj2}
&\pi_{y}(E)\subseteq \pi_{y}(\partial E)
\end{align}
We prove only \eqref{uguproj} since \eqref{uguproj2} follows similarly. Let $w\in \pi_{x}(E)$ and consider $\pi_{x}^{-1}\{w\} = w \cdot \mathbb{L}_{y}$ where $\mathbb{L}_{y}=\{(0,y,0)\ :\ y\in\mathbb{R}\}$ is as in Section \ref{s:LW}. By definition there exists $y_1\in \mathbb{R}$ such that $w \cdot (0, y_{1}, 0)\in E$ and since $E$ is bounded there also exists $y_2\in\mathbb{R}$ such that $w \cdot  (0,y_{2}, 0)\in \mathbb{H} \, \setminus \, \overline{E}$. Since $w \cdot \mathbb{L}_{y}$ is connected, there finally exists $y_3\in\mathbb{R}$ such that $w \cdot  (0,y_{3},0)\in \partial E$ which immediately implies \eqref{uguproj}. Using Theorem \ref{main}, \eqref{uguproj}, and \eqref{uguproj2} we get
\[
|E| \lesssim |\pi_x(\partial E)|^{\frac{2}{3}}|\pi_y(\partial E)|^{\frac{2}{3}}.
\]
Now the isoperimetric inequality \eqref{weakIP} follows using Lemma \ref{per2}.
\end{proof}

\bibliographystyle{plain}
\bibliography{references}

\end{document}